\DeclareMathAlphabet{\mathpzc}{OT1}{pzc}{m}{it}
\def\eqdefa{\buildrel\hbox{\footnotesize def}\over =}
\newcommand{\ve}{\varepsilon}
\newcommand{\ud}{\mathrm{d}}
\newcommand{\vv}{\mathbf{v}}
\newcommand{\aaa}{\mathbf{a}}
\newcommand{\xx}{\mathbf{x}}
\newcommand{\yy}{\mathbf{y}}
\newcommand{\nn}{\mathbf{n}}
\newcommand{\hh}{\mathbf{h}}
\newcommand{\sss}{\mathbf{s}}
\newcommand{\A}{\mathbf{A}}
\newcommand{\FF}{\mathbf{F}}
\newcommand{\CB}{\mathcal{B}}
\newcommand{\CC}{\mathcal{C}}
\newcommand{\CS}{\mathcal{S}}
\newcommand{\CV}{\mathcal{V}}
\newcommand{\CE}{\mathcal{E}}
\newcommand{\CF}{\mathcal{F}}
\newcommand{\CG}{\mathcal{G}}
\newcommand{\CU}{\mathcal{U}}
\newcommand{\CT}{\mathcal{T}}
\newcommand{\CA}{\mathcal{A}}
\newcommand{\CP}{\mathcal{P}}
\newcommand{\CH}{\mathcal{H}}
\newcommand{\CW}{\mathcal{W}}
\newcommand{\ML}{\mathscr{L}}
\newcommand{\MF}{\mathscr{F}}
\newcommand{\Fi}{\mathfrak{i}}
\newcommand{\Fp}{\mathfrak{p}}
\newcommand{\BR}{{\mathbb{R}^2}}
\newcommand{\BOm}{\mathbf{\Omega}}
\newtheorem{theorem}{Theorem}
\newtheorem{lemma}[theorem]{Lemma}
\numberwithin{theorem}{section}
\numberwithin{equation}{section}
\newtheorem{definition}[theorem]{Definition}
\title{Uniqueness of global weak solutions to the frame hydrodynamics for biaxial nematic phases in $\mathbb{R}^2$}
\author{
Sirui Li\footnote{ School of Mathematics and Statistics, Guizhou University, Guiyang 550025, China (srli@gzu.edu.cn) },
Chenchen Wang\footnote{ School of Mathematics and Statistics, Guizhou University, Guiyang 550025, China (ccwangmath@163.com) },
Jie Xu\footnote{LSEC and NCMIS, Institute of Computational Mathematics and Scientific/Engineering Computing (ICMSEC), Academy of Mathematics and Systems Science (AMSS), Chinese Academy of Sciences, Beijing, China (xujie@lsec.cc.ac.cn)}}
\date{}
\begin{document}
\maketitle
\begin{abstract}
We consider the hydrodynamics for biaxial nematic phases described by a field of orthonormal frame, which can be derived from a molecular-theory-based tensor model. We prove the uniqueness of global weak solutions to the Cauchy problem of the frame hydrodynamics in dimensional two. The proof is mainly based on the suitable weaker energy estimates within the Littlewood--Paley analysis. We take full advantage of the estimates of nonlinear terms with rotational derivatives on $SO(3)$, together with cancellation relations and dissipative structures of the biaxial frame system.

\textbf{Keywords.} Liquid crystals, biaxial nematic phase, frame hydrodynamics, uniqueness of weak solutions, Littlewood--Paley theory

\textbf{Mathematics Subject Classification (2020).}\quad  35A02, 76A15, 35Q35
\end{abstract}


\section{Introduction}

This paper is devoted to the uniqueness of weak solutions to the two-dimensional hydrodynamics of biaxial nematic liquid crystals. Since its local anisotropy is non-axisymmetric, the orientational order needs to be represented by an orthonormal frame field $\Fp\in SO(3)$ instead of a unit vector field for the uniaxial nematics. The biaxial hydrodynamics is a coupled system between evolution of the frame field and the Navier--Stokes equation.


The uniaxial hydrodynamics, i.e. the well-known Ericksen--Leslie model \cite{E-61,E-91,Les}, has been studied extensively \cite{Lin4,WZZ4,Ball}.
On the analytical aspect, the existence and uniqueness of global weak solutions are established \cite{Lin2, Hong,HX,HLW,WW,Lin3,Lin5,WWZ-zju,LTX}; the well-posedness of smooth solutions has also been studied, for the original model \cite{WZZ1,WW,HNPS} and an inertial analogue \cite{JL,CW}.
More results are summarized in several review articles \cite{Lin4,WZZ4,Ball}.

For the biaxial hydrodynamics \cite{S-A,Liu-M,BP,S-W-M,GV2,LLC}, its form has been written using various variables, which should be equivalent.
A recent work \cite{LX} derive its coefficients from a molecular-theory-based two-tensor model \cite{XZ} based on the Hilbert expansion, so that they are expressed by molecular parameters.
In the derivation, the energy dissipation is maintained, and the Ericksen--Leslie model is recovered if the local anisotropy is uniaxial.
The model in \cite{LX} can be formulated by all the components of the orthonormal frame field, which turns out to be convenient for analyses.
This formulation is utilized to establish the well-posedness of smooth solutions in $\mathbb{R}^d(d=2,3)$ and the global existence of weak solutions in $\mathbb{R}^2$ \cite{LWX}, which, to our knowledge, is the first analytic work for the \emph{full-form} biaxial hydrodynamics, although an artificial simplified model has been discussed \cite{LLWa}.

The aim of this paper is to prove the uniqueness of the global weak solution established in \cite{LWX}.
To this end, it requires to derive closed energy estimates for the system formed by the difference of two solutions.
This is, however, difficult to be done in the natural energy space because of some difficult nonlinear terms in the biaxial hydrodynamics.
We shall apply the Littlewood--Paley theory to consider weaker metrics (see (\ref{Phi-t-metric}) and (\ref{Wj-xt})), which
has been succesfully carried out for the Ericksen--Leslie model \cite{LTX,WWZ-zju,Xu-Zhangzf,DW}.
A major difficulty lies within the handling of the nonlinear rotational derivatives on $SO(3)$ in the energy dissipative law.
To facilitate the analysis, it is necessary to rewrite the the orientational elasticity in a equivalent form (see (\ref{new-elasitic-density})).
Moreover, to obtain the dissipative estimates of higher-order derivative terms (see Lemma \ref{key-lemma}), a key ingredient is to employ the decomposition by the tangential space at a point on $SO(3)$ and its orthogonal complement. Such a decomposition has played a key role in the preceding work \cite{LWX}.

Before we present the main result, we prepare some notations for orthogonal frames and tensors that will be repeatedly used throughout our analysis, followed by writing down the biaxial frame hydrodynamics.

\subsection{Preliminaries}\label{Preliminaries}

We denote by $\Fp=(\nn_1,\nn_2,\nn_3)\in SO(3)$ the orthonormal frame, which is constituted by three mutually perpendicular unit vectors.  The symbol $\otimes$ stands for the tensor product. For any two tensors $U$ and $V$ with the same order, the dot product $U\cdot V$ is defined by summing up the product of the corresponding coordinates, i.e.,
\begin{align*}
  U\cdot V=U_{i_1\cdots i_n}V_{i_1\cdots i_n},\quad|U|^2=U\cdot U.
\end{align*}
Hereafter, the Einstein summation convention on repeated indices is assumed.

To express symmetric tensors conveniently, the monomial notation will be adopted as follows:
\begin{align*}
  \nn^{k_1}_1\nn^{k_2}_2\nn^{k_3}_3=\Big(\underbrace{\nn_1\otimes\cdots\otimes\nn_1}_{k_1}\otimes\underbrace{\nn_2\otimes\cdots\otimes\nn_2}_{k_2}\otimes\underbrace{\nn_3\otimes\cdots\otimes\nn_3}_{k_3}\Big)_{\rm sym}.
\end{align*}
In other words, when the symbol $\otimes$ is omitted in a product, it implies that the resulting tensor has been symmetrized. For example, for the frame $\Fp=(\nn_1,\nn_2,\nn_3)$ and $\alpha,\beta=1,2,3,$ we have
\begin{align*}
  &\nn_{\alpha}^2=\nn_{\alpha}\otimes\nn_{\alpha},\quad\nn_{\alpha}\nn_{\beta}=\frac{1}{2}(\nn_{\alpha}\otimes\nn_{\beta}+\nn_{\beta}\otimes\nn_{\alpha}),\quad\alpha\neq\beta.
\end{align*}
In this way, the $3\times3$ identity tensor $\Fi$ can be expressed as a polynomial, i.e.,
$\Fi=\nn_1^2+\nn_2^2+\nn_3^3$.

The differential operators on $SO(3)$ will be involved when describing the frame hydrodynamics. For any frame $\Fp=(\nn_1,\nn_2,\nn_3)\in SO(3)$, we denote by $T_{\Fp}SO(3)$ the tangential space of $SO(3)$ at a point $\Fp$, which can be spanned by the orthogonal basis:
\begin{align*}
  V_1=(0,\nn_3,-\nn_2),\quad V_2=(-\nn_3,0,\nn_1),\quad V_3=(\nn_2,-\nn_1,0).
\end{align*}
Then, its orthogonal complement space $(T_{\Fp}SO(3))^{\perp}$ can be spanned by
\begin{align*}
  W_1=&(0,\nn_3,\nn_2),\quad W_2=(\nn_3,0,\nn_1),\quad W_3=(\nn_2,\nn_1,0),\\
  W_4=&(\nn_1,0,0),\quad
  W_5=(0,\nn_2,0),\quad
  W_6=(0,0,\nn_3).
\end{align*}
Consequently, we define the differential operators $\ML_k(k=1,2,3)$ on $T_{\Fp}SO(3)$ by taking the inner products of the orthogonal basis $\{V_1,V_2,V_3\}$ and
$\partial/\partial\Fp=(\partial/\partial\nn_1,\partial/\partial\nn_2,\partial/\partial\nn_3)$, that is,
\begin{align}\label{diff-ML1-3}
  \left\{
  \begin{aligned}
    &\ML_1\eqdefa V_1\cdot\frac{\partial}{\partial\Fp}=\nn_3\cdot\frac{\partial}{\partial\nn_2}-\nn_2\cdot\frac{\partial}{\partial\nn_3},\\
    &\ML_2\eqdefa V_2\cdot\frac{\partial}{\partial\Fp}=\nn_1\cdot\frac{\partial}{\partial\nn_3}-\nn_3\cdot\frac{\partial}{\partial\nn_1},\\
    &\ML_3\eqdefa V_3\cdot\frac{\partial}{\partial\Fp}=\nn_2\cdot\frac{\partial}{\partial\nn_1}-\nn_1\cdot\frac{\partial}{\partial\nn_2},
  \end{aligned}
  \right.
\end{align}
where $\ML_k(k=1,2,3)$ are actually the derivatives along the infinitesimal rotation about $\nn_k$. Acting the operators $\ML_k(k=1,2,3)$ on $\nn_l$, it follows that $\ML_k\nn_l=\epsilon^{klp}\nn_p$ with $\epsilon^{klp}$ being the Levi-Civita symbol.
The operators $\ML_k$ are also suitable for a functional if we replace $\partial/\partial\Fp$ by the variational derivative $\delta/\delta\Fp$.

In order to deal with the estimate of the higher-order derivative terms with the differential operators $\ML_k(k=1,2,3)$, we will resort to the orthogonal decomposition with respect to the tangential space $T_{\Fp}SO(3)$. More specifically, for any two matrices $A,B\in\mathbb{R}^{3\times3}$, the inner product $A\cdot B$ can be expressed as
\begin{align}\label{Orth-decomp-SO3}
A\cdot B=\sum_{k=1}^3\frac{1}{|V_k|^2}(A\cdot V_k)(B\cdot V_k)+\sum_{k=1}^6\frac{1}{|W_k|^2}(A\cdot W_k)(B\cdot W_k).
\end{align}
On the other hand, for any frame $\Fp=(\nn_1,\nn_2,\nn_3)\in SO(3)$, any first-order differential operator $\mathcal{D}$, and $\alpha,\beta=1,2,3$, it holds that
\begin{align}\label{import-properties}
\left\{
\begin{aligned}
&\mathcal{D}\nn_1=(\mathcal{D}\nn_1\cdot\nn_2)\nn_2+(\mathcal{D}\nn_1\cdot\nn_3)\nn_3\\
&\mathcal{D}\nn_2=(\mathcal{D}\nn_2\cdot\nn_1)\nn_1+(\mathcal{D}\nn_2\cdot\nn_3)\nn_3\\
&\mathcal{D}\nn_3=(\mathcal{D}\nn_3\cdot\nn_2)\nn_2+(\mathcal{D}\nn_3\cdot\nn_1)\nn_1\\
&\mathcal{D}\nn_{\alpha}\cdot\nn_{\beta}+\mathcal{D}\nn_{\beta}\cdot\nn_{\alpha}=\mathcal{D}(\nn_{\alpha}\cdot\nn_{\beta})=0,\\
&W_{\alpha}\cdot\mathcal{D}\Fp=0,
\end{aligned}
    \right.
\end{align}
where we reiterate that $\{W_{\alpha}\}^6_{\alpha=1}$ is the orthogonal basis of $(T_{\Fp}SO(3))^{\perp}$.

\subsection{Frame hydrodynamics}\label{frame-hydrodynamics}

The local orientation of biaxial nematic phases is described by an orthonormal frame $\Fp=(\nn_1,\nn_2,\nn_3)\in SO(3)$.
The corresponding orientational elasticity, can be written as
\begin{align}\label{elastic-energy}
\CF_{Bi}[\Fp]=\int_{\mathbb{R}^2}f_{Bi}(\Fp,\nabla\Fp)\ud\xx,
\end{align}
where the deformation free energy density $f_{Bi}$ has the following form \cite{SV,GV1,Xu2}:
\begin{align*}
f_{Bi}(\Fp,\nabla\Fp)=&\,\frac{1}{2}\Big(K_{1}(\nabla\cdot\nn_{1})^{2}+K_{2}(\nabla\cdot\nn_{2})^{2}+K_{3}(\nabla\cdot\nn_{3})^{2}\\
&\,+K_{4}(\nn_{1}\cdot\nabla\times\nn_{1})^{2}+K_{5}(\nn_{2}\cdot\nabla\times\nn_{2})^{2}+K_{6}(\nn_{3}\cdot\nabla\times\nn_{3})^{2}\\
&\,+K_{7}(\nn_{3}\cdot\nabla\times\nn_{1})^{2}+K_{8}(\nn_{1}\cdot\nabla\times\nn_{2})^{2}+K_{9}(\nn_{2}\cdot\nabla\times\nn_{3})^{2}\\
&\,+K_{10}(\nn_{2}\cdot\nabla\times\nn_{1})^2+K_{11}(\nn_{3}\cdot\nabla\times\nn_{2})^2+K_{12}(\nn_{1}\cdot\nabla\times\nn_{3})^2\\
&\,+\gamma_{1}\nabla\cdot[(\nn_{1}\cdot\nabla)\nn_{1}-(\nabla\cdot\nn_{1})\nn_{1}]+\gamma_{2}\nabla\cdot[(\nn_{2}\cdot\nabla)\nn_{2}-(\nabla\cdot\nn_{2})\nn_{2}]\\
&\,+\gamma_{3}\nabla\cdot[(\nn_{3}\cdot\nabla)\nn_{3}-(\nabla\cdot\nn_{3})\nn_{3}]\Big).
\end{align*}
The elastic energy density $f_{Bi}$ is composed of twelve bulk terms and three surface terms.
The coefficients $K_i(i=1,\cdots,12)$ of bulk terms are all positive.
Each surface term is a null Lagrangian, so that the coefficients $\gamma_i>0 (i=1,2,3)$ will be determined later as needed.
We remark that the above form should be the most convenient one for our analysis later, although other equivalent forms are available.

In order to formulate the frame hydrodynamics, we introduce a set of local basis formed by nine second-order tensors, that is, the identity tensor $\Fi$, five symmetric traceless tensors,
\begin{align*}
    \sss_1=\nn^2_1-\frac13\Fi,\quad \sss_2=\nn^2_2-\nn^2_3,\quad \sss_3=\nn_1\nn_2,\quad \sss_4=\nn_1\nn_3,\quad \sss_5=\nn_2\nn_3,
\end{align*}
and three asymmetric traceless tensors,
\begin{align*}
    \aaa_1=\nn_1\otimes\nn_2-\nn_2\otimes\nn_1,\quad \aaa_2=\nn_3\otimes\nn_1-\nn_1\otimes\nn_3,\quad \aaa_3=\nn_2\otimes\nn_3-\nn_3\otimes\nn_2.
\end{align*}

The frame hydrodynamics for biaxial nematic phases consists of evolution equations for the orthonormal frame field $\Fp=(\nn_1,\nn_2,\nn_3)\in SO(3)$, coupled with the Navier--Stokes equations for the fluid velocity field $\vv$. These equations are given by (see \cite{LX} for details):
\begin{align}
&\,\chi_1\dot{\nn}_2\cdot\nn_3-\frac{1}{2}\chi_1\BOm\cdot\aaa_3-\eta_1\A\cdot\sss_5+\ML_1\CF_{Bi}=0,\label{frame-equation-n1}\\
&\,\chi_2\dot{\nn}_3\cdot\nn_1-\frac{1}{2}\chi_2\BOm\cdot\aaa_2-\eta_2\A\cdot\sss_4+\ML_2\CF_{Bi}=0,\label{frame-equation-n2}\\
&\,\chi_3\dot{\nn}_1\cdot\nn_2-\frac{1}{2}\chi_3\BOm\cdot\aaa_1-\eta_3\A\cdot\sss_3+\ML_3\CF_{Bi}=0,\label{frame-equation-n3}\\
&\,\Fp=(\nn_1,\nn_2,\nn_3)\in SO(3),\label{frame-SO3}\\
&\,\dot{\vv}=-\nabla p+\eta\Delta\vv+\nabla\cdot\sigma+\mathfrak{F},\label{yuan-frame-equation-v}\\
&\,\nabla\cdot\vv=0,\label{yuan-imcompressible-v}
\end{align}
where we use the dot derivative to denote the material derivative $\partial_t+\vv\cdot\nabla$.
The constraint (\ref{frame-SO3}) is necessary in this formulation since other equations does not guarantee it.
In the equation of $\vv=(v_1,v_2,v_3)^T$, the pressure $p$ ensures the incompressibility (\ref{yuan-imcompressible-v}), and $\eta$ is the viscous coefficient.
The divergence of the stress $\sigma$ should be comprehended as $(\nabla\cdot\sigma)_i=\partial_j\sigma_{ij}$.
Now let us specify $\sigma$, for which we introduce $\A$ and $\BOm$ to represent the symmetric and skew-symmetric components of the velocity gradient $\kappa_{ij}=\partial_jv_i$, respectively, i.e.,
\begin{align*}
\A=\frac{1}{2}(\kappa+\kappa^T),\quad\BOm=\frac{1}{2}(\kappa-\kappa^T).
\end{align*}
Then, the stress $\sigma=\sigma(\Fp,\vv)$ is given by
\begin{align}\label{sigma-e}
\sigma(\Fp,\vv)=&\,\beta_1(\A\cdot\sss_1)\sss_1+\beta_0(\A\cdot\sss_2)\sss_1+\beta_0(\A\cdot\sss_1)\sss_2+\beta_2(\A\cdot\sss_2)\sss_2\nonumber\\
&\,+\beta_3(\A\cdot\sss_3)\sss_3-\eta_3\Big(\dot{\nn}_1\cdot\nn_2-\frac{1}{2}\BOm\cdot\aaa_1\Big)\sss_3\nonumber\\
&\,+\beta_4(\A\cdot\sss_4)\sss_4-\eta_2\Big(\dot{\nn}_3\cdot\nn_1-\frac{1}{2}\BOm\cdot\aaa_2\Big)\sss_4\nonumber\\
&\,+\beta_5(\A\cdot\sss_5)\sss_5-\eta_1\Big(\dot{\nn}_2\cdot\nn_3-\frac{1}{2}\BOm\cdot\aaa_3\Big)\sss_5\nonumber\\
&\,+\frac{1}{2}\eta_3(\A\cdot\sss_3)\aaa_1-\frac{1}{2}\chi_3\Big(\dot{\nn}_1\cdot\nn_2-\frac{1}{2}\BOm\cdot\aaa_1\Big)\aaa_1\nonumber\\
&\,+\frac{1}{2}\eta_2(\A\cdot\sss_4)\aaa_2-\frac{1}{2}\chi_2\Big(\dot{\nn}_3\cdot\nn_1-\frac{1}{2}\BOm\cdot\aaa_2\Big)\aaa_2\nonumber\\
&\,+\frac{1}{2}\eta_1(\A\cdot\sss_5)\aaa_3-\frac{1}{2}\chi_1\Big(\dot{\nn}_2\cdot\nn_3-\frac{1}{2}\BOm\cdot\aaa_3\Big)\aaa_3,
\end{align}
where the coefficients in (\ref{sigma-e}) are derived from molecular parameters and satisfy the following conditions (see {\rm \cite{LX}} for details):
\begin{align}\label{coefficient-conditions}
\left\{
\begin{array}{l}
\beta_i\geq0,~i=1,\cdots,5,\quad \chi_j>0,~j=1,2,3,\quad \eta>0,\vspace{1ex}\\
\beta^2_0\leq\beta_1\beta_2,~~\eta^2_1\leq\beta_5\chi_1,~~\eta^2_2\leq\beta_4\chi_2,~~\eta^2_3\leq\beta_3\chi_3.
\end{array}
  \right.
\end{align}
These conditions are crucial for the energy dissipation law to hold.
The body force $\mathfrak{F}$ is defined by
\begin{align}\label{external-force-F}
\mathfrak{F}_i=\partial_i\nn_1\cdot\nn_2\ML_3\CF_{Bi}+\partial_i\nn_3\cdot\nn_1\ML_2\CF_{Bi}+\partial_i\nn_2\cdot\nn_3\ML_1\CF_{Bi}.
\end{align}

It is preferred that the orthonormal constraint (\ref{frame-SO3}) is maintained automatically.
For this purpose, using (\ref{import-properties}), we reformulate the equations of $\Fp$ by all its coordinates:
\begin{align}
\dot{\nn}_1=&\Big(\frac{1}{2}\BOm\cdot\aaa_1+\frac{\eta_3}{\chi_3}\A\cdot\sss_3-\frac{1}{\chi_3}\ML_3\CF_{Bi}\Big)\nn_2\nonumber\\
&-\Big(\frac{1}{2}\BOm\cdot\aaa_2+\frac{\eta_2}{\chi_2}\A\cdot\sss_4-\frac{1}{\chi_2}\ML_2\CF_{Bi}\Big)\nn_3,\label{new-frame-equation-n1}\\
\dot{\nn}_2=&-\Big(\frac{1}{2}\BOm\cdot\aaa_1+\frac{\eta_3}{\chi_3}\A\cdot\sss_3-\frac{1}{\chi_3}\ML_3\CF_{Bi}\Big)\nn_1\nonumber\\
&+\Big(\frac{1}{2}\BOm\cdot\aaa_3+\frac{\eta_1}{\chi_1}\A\cdot\sss_5-\frac{1}{\chi_1}\ML_1\CF_{Bi}\Big)\nn_3,\label{new-frame-equation-n2}\\
\dot{\nn}_3=&\Big(\frac{1}{2}\BOm\cdot\aaa_2+\frac{\eta_2}{\chi_2}\A\cdot\sss_4-\frac{1}{\chi_2}\ML_2\CF_{Bi}\Big)\nn_1\nonumber\\
&-\Big(\frac{1}{2}\BOm\cdot\aaa_3+\frac{\eta_1}{\chi_1}\A\cdot\sss_5-\frac{1}{\chi_1}\ML_1\CF_{Bi}\Big)\nn_2,\label{new-frame-equation-n3}\\
\dot{\vv}=&-\nabla p+\eta\Delta\vv+\nabla\cdot(\sigma+\sigma^d),\label{frame-equation-v}\\
\nabla\cdot\vv=&0,\label{imcompressible-v}
\end{align}
where the condition that $\Fp=(\nn_1,\nn_2,\nn_3)\in SO(3)$ is implied by the equations themselves.
We have also rewrite the body force:
\begin{align*}
(\mathfrak{F})_i=\partial_j\sigma^d_{ij}+\partial_i\widetilde{p},\quad \sigma^d_{ij}=-\frac{\partial f_{Bi}}{\partial(\partial_j\Fp)}\cdot\partial_i\Fp,
\end{align*}
with $\widetilde{p}$ absorbed into the pressure term $p$ (see Lemma \ref{body-force-lemma} for details).

The relations between coefficients (\ref{coefficient-conditions}) will guarantee that the biaxial hydrodynamics (\ref{new-frame-equation-n1})--(\ref{imcompressible-v}) has the following basic energy dissipation law \cite{LWX}:
\begin{align}\label{energy-law}
&\frac{\ud}{\ud t}\Big(\frac{1}{2}\int|\vv|^2\ud\xx+\CF_{Bi}[\Fp]\Big)
=-\eta\|\nabla\vv\|^2_{L^2}-\sum^3_{k=1}\frac{1}{\chi_k}\|\ML_k\CF_{Bi}\|^2_{L^2}\nonumber\\
&\qquad-\bigg(\beta_1\|\A\cdot\sss_1\|^2_{L^2}+2\beta_0\int(\A\cdot\sss_1)(\A\cdot\sss_2)\ud\xx+\beta_2\|\A\cdot\sss_2\|^2_{L^2}\bigg)\nonumber\\
&\qquad-\Big(\beta_3-\frac{\eta^2_3}{\chi_3}\Big)\|\A\cdot\sss_3\|^2_{L^2}
-\Big(\beta_4-\frac{\eta^2_2}{\chi_2}\Big)\|\A\cdot\sss_4\|^2_{L^2}
-\Big(\beta_5-\frac{\eta^2_1}{\chi_1}\Big)\|\A\cdot\sss_5\|^2_{L^2}.
\end{align}

To simplify the presentation, compared with the original model derived by \cite{LX}, we have assumed that the concentration of rigid molecules, together with the product of the Boltzmann constant and the absolute temperature, are all equal to one.

\subsection{The main result}

For any given constant orthonormal frame $\Fp^{*}=(\nn^{*}_1,\nn^{*}_2,\nn^{*}_3)\in SO(3)$, we denote
\begin{align*}
    H^1_{\Fp^{*}}\big(\mathbb{R}^2,SO(3)\big)\eqdefa\big\{\Fp=(\nn_1,\nn_2,\nn_3): \Fp-\Fp^{*}\in H^1(\mathbb{R}^2;\mathbb{R}^3),~|\nn_i|=1~{\rm a.e.~ in}~\mathbb{R}^2, i=1,2,3\big\}.
\end{align*}
Given two constants $\tau$ and $T$ with $0\leq\tau<T$, two spaces $V(\tau,T)$ and $H(\tau,T)$ are defined by
\begin{align*}
V(\tau,T)\eqdefa&\bigg\{\Fp=(\nn_1,\nn_2,\nn_3):\mathbb{R}^2\times[\tau,T]\rightarrow SO(3)\Big|\Fp(t)\in H^1_{\Fp^*}\big(\mathbb{R}^2,SO(3)\big)~\text{for a.e.}~ t\in[\tau,T]\\
&\quad\text{and satisfies}~\mathop{\mathrm{ess\sup}}_{\tau\leq t\leq T}\int_{\mathbb{R}^2}|\nabla\Fp(\cdot,t)|^2\ud\xx+\int^T_{\tau}\int_{\mathbb{R}^2}(|\nabla^2\Fp|^2+|\partial_t\Fp|^2)\ud\xx\ud t<\infty,\\
&\quad\text{where}~|\nabla\Fp(\cdot,t)|^2=\sum^3_{i=1}|\nabla\nn_i(\cdot,t)|^2,~|\nabla^2\Fp|^2=\sum^3_{i=1}|\nabla^2\nn_i|^2,~|\partial_t\Fp|^2=\sum^3_{i=1}|\partial_t\nn_i|^2\bigg\},\\
H(\tau,T)\eqdefa&\bigg\{\vv:\mathbb{R}^2\times[\tau,T]\rightarrow\mathbb{R}^2\Big| \vv~\text{is measurable and satisfies}\\
&\quad \mathop{\mathrm{ess\sup}}_{\tau\leq t\leq T}\int_{\mathbb{R}^2}|\vv(\cdot,t)|^2\ud\xx
+\int^T_{\tau}\int_{\mathbb{R}^2}|\nabla\vv|^2\ud\xx\ud t<\infty\bigg\}.
\end{align*}

Let us introduce the global existence result of weak solutions of the frame hydrodynamics for biaxial nematics in dimensional two, which is established in \cite{LWX}.
\begin{theorem}[see \cite{LWX}]\label{global-posedness-theorem}
Let $(\Fp_0, \vv_0)\in H^1_{\Fp^{*}}\big(\mathbb{R}^2,SO(3)\big)\times L^2(\mathbb{R}^2,\mathbb{R}^2)$ be given initial data with $\nabla\cdot\vv_0=0$ and $\Fp_0=\big(\nn_1(\xx,0),\nn_2(\xx,0),\nn_3(\xx,0)\big)\in SO(3)$. Then there exists a global weak solution $(\Fp,\vv):\mathbb{R}^2\times[0,+\infty)\rightarrow SO(3)\times\mathbb{R}^2$ of the biaxial frame system {\rm (\ref{new-frame-equation-n1})--(\ref{imcompressible-v})} such that the solution $(\Fp,\vv)$ is smooth in $\mathbb{R}^2\times((0,+\infty)\setminus\{T_{l}\}^L_{l=1})$ for a finite number of times $\{T_{l}\}^L_{l=1}$. Furthermore, there exists two constants $\ve_0>0$ and $R_0>0$ such that each singular point $(x^l_i,T_l)$ is characterized by the condition
\begin{align*}
    \limsup_{t\nearrow T_l}\int_{B_R(x^l_i)}\big(|\nabla\Fp|^2+|\vv|^2\big)(\cdot,t)\ud t>\ve_0,\quad |\nabla\Fp|^2=\sum^3_{i=1}|\nabla\nn_i|^2,
\end{align*}
for any $R>0$ with $R\leq R_0$.
\end{theorem}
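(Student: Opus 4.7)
The plan is to adapt the Lin--Lin--Wang approach for the Ericksen--Leslie system to the biaxial frame formulation (\ref{new-frame-equation-n1})--(\ref{imcompressible-v}). The overall strategy has three phases: (i) construct approximate solutions via a Ginzburg--Landau-type penalization of the $SO(3)$ constraint coupled with a Galerkin scheme for $\vv$; (ii) exploit the energy dissipation (\ref{energy-law}) to obtain uniform bounds and pass to the limit by Aubin--Lions compactness; (iii) establish a small-energy regularity theorem and combine it with the monotonicity of the total energy to show smoothness outside finitely many singular times.

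For the approximation I would enlarge the configuration space from $SO(3)$ to $\mathbb{R}^9$ and add to $\CF_{Bi}[\Fp]$ a penalty of the form
\begin{align*}
\frac{1}{\ve}\sum_{i=1}^3(|\nn_i|^2-1)^2+\frac{1}{\ve}\sum_{i\neq j}(\nn_i\cdot\nn_j)^2,
\end{align*}
while replacing each $\ML_k\CF_{Bi}$ in (\ref{new-frame-equation-n1})--(\ref{new-frame-equation-n3}) by the corresponding Euler--Lagrange expression for the extended energy, so that the structure of the dissipation law is maintained at every $\ve>0$. Short-time smooth solutions $(\Fp_\ve,\vv_\ve)$ exist by standard semilinear parabolic theory, and the penalized energy identity together with (\ref{coefficient-conditions}) gives uniform bounds $\Fp_\ve-\Fp^{*}\in L^\infty_t H^1_x$, $\vv_\ve\in L^\infty_t L^2_x\cap L^2_t\dot H^1_x$, plus control of the penalty terms forcing $\Fp_\ve\to\Fp$ with $\Fp\in SO(3)$ almost everywhere. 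Passing $\ve\to 0$ then produces a global weak solution of (\ref{new-frame-equation-n1})--(\ref{imcompressible-v}).

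For the partial regularity I would prove a small-energy lemma: if $\int_{B_R(\xx_0)}(|\nabla\Fp|^2+|\vv|^2)(\cdot,t)\ud\xx<\ve_0$ holds uniformly for $t$ near some $T$, then the solution is smooth near $(\xx_0,T)$. In two dimensions this rests on the critical Sobolev embedding $H^1(\mathbb{R}^2)\hookrightarrow L^p$ for every $p<\infty$ and on a bootstrap for the penalized system, combined with the fact that in the absence of energy concentration the nonlinear terms in (\ref{sigma-e}) and in $\ML_k\CF_{Bi}$ gain enough integrability. Since the total energy $\frac{1}{2}\|\vv\|_{L^2}^2+\CF_{Bi}[\Fp]$ is monotone non-increasing, the number of times $T_l$ carrying a concentration at least $\ve_0$ is bounded by $E(0)/\ve_0$, and only finitely many points $x^l_i$ at each $T_l$ can absorb such concentration, yielding the stated characterization of the singular set.

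The principal obstacle is the joint nonlinearity of the rotational derivatives $\ML_k\CF_{Bi}$ and the stress $\sigma$: under penalization one must verify that the Euler--Lagrange terms for the extended energy reduce to $\ML_k\CF_{Bi}$ exactly when $\Fp\in SO(3)$, and that the penalty contribution appears as a non-negative dissipative term rather than producing uncancelled cross terms. The decomposition (\ref{Orth-decomp-SO3})--(\ref{import-properties}) between $T_{\Fp}SO(3)$ and its orthogonal complement is essential here, since it separates the genuine frame-rotational dynamics, captured by $V_1,V_2,V_3$, from the penalty-only contributions, captured by $W_4,W_5,W_6$, and allows the dissipation identity to close at each approximation level. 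Once this is set up, the remaining tasks --- strong compactness in time, identification of the limit of the nonlinear stress, and the passage from local strong convergence to the small-energy regularity criterion --- follow the standard pattern for two-dimensional nematic hydrodynamics.
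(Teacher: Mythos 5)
This theorem is cited from \cite{LWX}, not proved in the present paper, so there is no in-paper proof against which to compare your argument line by line. What the paper does tell us is that the constructed solution ``is also called the Struwe-type weak solution,'' which confirms that your phase (iii) --- small-energy $\varepsilon$-regularity, monotonicity of the total energy $\tfrac{1}{2}\|\vv\|_{L^2}^2+\CF_{Bi}[\Fp]$, and a count of concentration times bounded by $E(0)/\ve_0$ --- is the right overall scheme.

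Where you likely diverge from \cite{LWX} is phase (i). You propose a Ginzburg--Landau penalization of the $SO(3)$ constraint over $\mathbb{R}^9$, but the paper goes out of its way in Section 1.2 to introduce the reformulated system (\ref{new-frame-equation-n1})--(\ref{imcompressible-v}) precisely because ``the condition that $\Fp=(\nn_1,\nn_2,\nn_3)\in SO(3)$ is implied by the equations themselves.'' It also records that \cite{LWX} establishes well-posedness of \emph{smooth} solutions in $\mathbb{R}^d$ ($d=2,3$); the natural building block is therefore a short-time smooth solution of (\ref{new-frame-equation-n1})--(\ref{imcompressible-v}) that preserves the constraint exactly, not an approximation that only satisfies it asymptotically. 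Your reading of the orthogonal decomposition (\ref{Orth-decomp-SO3}) --- that $W_4,W_5,W_6$ ``capture penalty-only contributions'' --- would then be a misattribution: in the actual construction they are used to control the normal (non-tangential) components of higher derivatives of $\Fp$ in the a priori estimates, since $\Fp$ already lies on $SO(3)$.

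There is also a genuine gap in your penalization sketch even taken on its own terms. Once the constraint is only approximate, the equations (\ref{new-frame-equation-n1})--(\ref{new-frame-equation-n3}) --- which project the right-hand side onto the tangent frame $\{\nn_2,\nn_3\}$, $\{\nn_1,\nn_3\}$, $\{\nn_1,\nn_2\}$ --- are no longer well defined as an orthogonal decomposition, and the stress (\ref{sigma-e}) no longer produces the cancellations needed for (\ref{energy-law}). You assert that the penalized Euler--Lagrange terms ``maintain the structure of the dissipation law at every $\ve>0$,'' but this is exactly the hard part: one must write a penalized system \emph{for which} the cross terms between the tangential dynamics and the penalty force cancel, and then show the penalty term enters the energy identity with a sign. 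Without that verification the uniform bounds in phase (ii) do not follow, and the whole chain of compactness arguments breaks. If you insist on a penalization route, you should spell out the penalized equations for both $\Fp$ and $\vv$ explicitly and exhibit the cancellation; otherwise it is cleaner to work directly with the constraint-preserving form (\ref{new-frame-equation-n1})--(\ref{imcompressible-v}), as the paper's framing suggests \cite{LWX} does.
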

The solution constructed in Theorem \ref{global-posedness-theorem} is also called the Struwe-type weak solution.
In the following, we will show the uniqueness of the weak solution in the class $V(0,T)\times H(0,T)$ for arbitrary $T>0$.
\begin{theorem}\label{uniqueness-theorem}
  Assume that the assumptions in Theorem \ref{global-posedness-theorem} are satisfied. Let $(\Fp^{(1)},\vv^{(1)})$ and $(\Fp^{(2)},\vv^{(2)})$ be two weak solutions of the frame hydrodynamic system \eqref{new-frame-equation-n1}--\eqref{imcompressible-v}
  determined by Theorem \ref{global-posedness-theorem},
  subject to the same initial data $(\Fp_0,\vv_0)$. Then we have $(\Fp^{(1)},\vv^{(1)})=(\Fp^{(2)}(t),\vv^{(2)}(t))$ for any $t\in[0,+\infty)$.
\end{theorem}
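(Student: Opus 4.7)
The plan is to set $\delta\Fp=\Fp^{(1)}-\Fp^{(2)}$, $\delta\vv=\vv^{(1)}-\vv^{(2)}$, $\delta p=p^{(1)}-p^{(2)}$, subtract the equations (\ref{new-frame-equation-n1})--(\ref{imcompressible-v}) for the two solutions, and derive a closed energy-type estimate for $(\delta\Fp,\delta\vv)$ in a norm strictly weaker than the natural one. In two dimensions, the obvious $L^2\times H^1$ energy estimate for the difference system fails to close because the nonlinear terms such as $\A\cdot\sss_i$ and $\ML_k\CF_{Bi}$ sit at the critical level. Following the Littlewood--Paley framework used for the Ericksen--Leslie model in \cite{LTX,WWZ-zju,Xu-Zhangzf,DW}, I would therefore work with the weaker metrics $\Phi(t)$ and $W_j(t)$ (defined in (\ref{Phi-t-metric}) and (\ref{Wj-xt})), essentially negatively-indexed Besov-type norms of the differences.

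The first main step is to apply the dyadic block operator $\Delta_j$ to the difference system, producing equations for $\Delta_j\delta\Fp$ and $\Delta_j\delta\vv$ with paraproduct and commutator remainders in the sense of Bony. Estimating these blockwise and summing with appropriate weights should yield a differential inequality of the shape
\begin{align*}
\frac{\ud}{\ud t}\Phi(t)+D(t)\lesssim A(t)\Phi(t)+B(t)\sup_j W_j(t),
\end{align*}
where $D(t)$ is a positive dissipation controlling weak norms of $\nabla\delta\Fp$, $\delta\vv$ and of the rotational derivatives $\ML_k$ applied to the difference, and $A(t),B(t)$ are integrable in time thanks to the global bounds $\|\nabla\Fp^{(i)}\|_{L^\infty_tL^2_x}+\|\nabla^2\Fp^{(i)}\|_{L^2_{t,x}}+\|\vv^{(i)}\|_{L^\infty_tL^2_x}+\|\nabla\vv^{(i)}\|_{L^2_{t,x}}<\infty$ inherited from (\ref{energy-law}).

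A crucial technical ingredient is the equivalent reformulation of the orientational elasticity (\ref{new-elasitic-density}), which isolates the leading-order Laplacian structure of $\ML_k\CF_{Bi}$ from lower-order nonlinear remainders; without it, the top derivatives in $\ML_k\CF_{Bi}$ cannot be cleanly absorbed into the dissipation. For these higher-order contributions I would invoke Lemma \ref{key-lemma} together with the orthogonal decomposition (\ref{Orth-decomp-SO3}) and the identities (\ref{import-properties}): on the tangential components $V_k$, the time derivative $\dot{\Fp}$ of the frame is controlled by the dissipation $\chi_k^{-1}\|\ML_k\CF_{Bi}\|_{L^2}^2$, while on the normal components $W_\alpha$ the orthonormality constraint forces those projections to vanish up to quadratic corrections that can be absorbed into $\Phi$. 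The body-force identity expressing $\mathfrak{F}$ as a divergence plus pressure gradient will also be used to avoid spurious top-order terms in the $\vv$-equation for the difference.

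The main obstacle will be the transport-type nonlinear interactions of the form $\delta\Fp\cdot\nabla^2\Fp^{(i)}$ and $\delta\vv\cdot\nabla\vv^{(i)}$ tested against the weak norm, which produce a logarithmic loss in the standard commutator estimates and prevent a straight Gronwall closure. The remedy, as in \cite{Xu-Zhangzf,LTX}, is to track $\Phi(t)$ jointly with $\sup_j W_j(t)$ and to close the resulting coupled system by an Osgood/log-Gronwall argument of the type $\Phi'(t)\le C\,g(t)\,\Phi(t)\log\bigl(e+1/\Phi(t)\bigr)$ with $g\in L^1_{\rm loc}$. Since $\Phi(0)=0$ by the common-initial-data hypothesis, Osgood's lemma then forces $\Phi\equiv 0$ on any $[0,T]$, giving $\delta\Fp\equiv 0$ and $\delta\vv\equiv 0$ and hence the desired uniqueness.
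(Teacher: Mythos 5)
Your proposal follows essentially the same route as the paper: apply the Littlewood--Paley blocks $\Delta_j$ to the difference system, work in the weaker metrics $\Phi(t)$ and $W^j$ from (\ref{Phi-t-metric}) and (\ref{Wj-xt}), use the reformulated elasticity (\ref{new-elasitic-density}) to isolate the leading Laplacian, invoke the orthogonal decomposition (\ref{Orth-decomp-SO3}) on $T_{\Fp}SO(3)$ to obtain the dissipative lower bound of Lemma \ref{key-lemma}, and close with Osgood. Two small corrections. First, you anticipate a logarithmic loss from the transport/commutator terms forcing a log-Gronwall closure as in \cite{Xu-Zhangzf,LTX}; in fact, with $s\in(0,1/2)$ the product and commutator Lemmas \ref{useful-inequality}--\ref{commutator} give bounds that are linear in $\Phi$ after the small-$\ve$ frequency-localized terms are absorbed by the dissipation, so the paper arrives at the linear inequality (\ref{energy-estimate}), $\Phi(t)\leq C\int_0^t F(\tau)\Phi(\tau)\ud\tau$, and Osgood is used only in its trivial modulus $\mu(r)=r$ case. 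Your log-Gronwall plan would still terminate correctly, but the extra precaution is unnecessary here and would make some of the intermediate estimates harder than they need to be. Second, the statement claims uniqueness for all $t\in[0,+\infty)$, yet the function $F(t)$ you use to close (\ref{energy-estimate}) is only locally integrable on intervals where the Struwe weak solutions are smooth. Your proposal omits the bootstrapping across the finitely many singular times $\{T_l\}$: one must run the Gronwall/Osgood argument on $[0,T_1-\theta]$ for all $\theta>0$, pass to $T_1$ by weak continuity of $(\Fp^{(i)},\vv^{(i)})$ in $C_w([0,\infty);H^1_{\Fp^*}\times L^2)$, and then restart at $T_1$, as the paper does at the end of Section \ref{uniqueness-section}. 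Without that step the argument only yields uniqueness up to the first singular time.
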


To show the above theorem, we introduce a suitable weaker metrics to arrive at a closed energy estimate.
The main obstacle is to control the higher-order derivative terms involving $\CH^{\Delta_j}_k(k=1,2,3)$ (see (\ref{H-Delta-j}) for the definitions).
We overcome it by utilizing the orthogonal decomposition (\ref{Orth-decomp-SO3}) to obtain the following dissipative estimate (see Lemma \ref{key-lemma}):
\begin{align*}
\sum^3_{k=1}\frac{1}{\chi_k}\|\CH^{\Delta_j}_k\|^2_{L^2}\geq \frac{2\gamma^2}{\chi}\|\Delta\Delta_j\delta_{\Fp}\|^2_{L^2}+\text{lower~order~terms},
\end{align*}
where $\gamma=\min\{\gamma_1,\gamma_2,\gamma_3\}$, $\chi=\max\{\chi_1,\chi_2,\chi_3\}$, $\delta_{\Fp}=\Fp^{(1)}-\Fp^{(2)}$, and $\Delta_j$ is the Littlewood--Paley operator defined in Subsection \ref{Little-Paley-subsect}.

The rest of the paper is organized as follows. In Section 2, we briefly introduce the Littlewood--Paley theory and some useful lemmas to be utilized subsequently.
Section 3 is devoted to the proof of the uniqueness of weak solution to the frame hydrodynamics. The dissipative estimate of higher-order derivative terms, which ensures the closure of energy estimates, will be discussed.

\section{Littlewood--Paley theory and some useful lemmas}

In this section, we introduce the Littlewood-Paley theory (see \cite{BCD} for more details) and some useful lemmas on nonlinear and commutator estimates.
Algebraic structures of $\frac{\delta\CF_{Bi}}{\delta\Fp}$ are also discussed.

\subsection{Littlewood-Paley theory}\label{Little-Paley-subsect}

We denote by $\CS(\mathbb{R}^d)$ the Schwartz space. Let $\CC$ be the annulus $\{\xi\in\mathbb{R}^d:\frac{3}{4}\leq|\xi|\leq\frac{8}{3}\}$ and $\CB$ the ball $\{\xi\in\mathbb{R}^d: |\xi|\leq\frac{4}{3}\}$. There exist two nonnegative radial functions $\chi, \varphi\in\CS(\mathbb{R}^d)$ supported in $\CB$ and $\CC$, respectively, such that
\begin{align*}
&\chi(\xi)+\sum_{j\geq0}\varphi(2^{-j}\xi)=1,~~\forall \xi\in\mathbb{R}^d,\\
&|j-j'|\geq2\Rightarrow \text{Supp}~\varphi(2^{-j}\xi)\cap \text{Supp}~\varphi(2^{-j'}\xi)=\emptyset.
\end{align*}
Denoting by $\MF$ and $\MF^{-1}$ the Fourier transform and its inverse, respectively, the frequency localization operators $\Delta_j$ and $S_j$ can be defined by
\begin{align*}
\Delta_jf\eqdefa&\MF^{-1}(\varphi(2^{-j}\xi)\MF f)=2^{jd}\int_{\mathbb{R}^d}h(2^j\yy)f(\xx-\yy)\ud\yy,~~\text{for}~j\geq0,\\
S_jf\eqdefa&\MF^{-1}(\chi(2^{-j}\xi)\MF f)=\sum_{-1\leq k\leq j-1}\Delta_k f=2^{jd}\int_{\mathbb{R}^d}\tilde{h}(2^j\yy)f(\xx-\yy)\ud\yy,\\
\Delta_{-1}f=&S_0f,\qquad \Delta_jf=0,~~\text{for}~j\leq-2,
\end{align*}
where $h=\MF^{-1}\varphi$ and $\tilde{h}=\MF^{-1}\chi$. By the choice of $\varphi$, it can be proved that
\begin{align*}
&\Delta_j\Delta_kf=0,~~\text{if}~|j-k|\geq2,\\
&\Delta_j(S_{k-1}f\Delta_kf)=0,~~\text{if}~|j-k|\geq5.
\end{align*}

Let $s\in\mathbb{R}$ and $1\leq p,q\leq\infty$. By the operator $\Delta_j$, we can define the norm of an element $f$ in the nonhomogeneous Besov space $B^s_{p,q}$ as
\begin{align*}
\|f\|_{B^s_{p,q}}\eqdefa\big\|\{2^{js}\|\Delta_jf\|_{L^p}\}_{j\geq-1}\big\|_{\ell^q},\quad \|f\|_{B^s_{p,\infty}}\eqdefa\sup_{j\geq-1}\{2^{js}\|\Delta_jf\|_{L^p}\}.
\end{align*}
In particular, a function $f\in H^s$ is characterized as follows:
\begin{align*}
\|f\|_{H^s}\sim \big\|\{2^{js}\|\Delta_jf\|_{L^2}\}_{j\geq-1}\big\|_{\ell^2}.
\end{align*}

For two smooth functions $u$ and $v$, the Bony's paraproduct decomposition in nonhomogeneous case is defined by
\begin{align*}
uv=T_uv+T_vu+R(u,v),
\end{align*}
where
\begin{align*}
T_uv=\sum_jS_{j-1}u\Delta_jv,\quad R(u,v)=\sum_{|j-j'|\leq 1}\Delta_ju\Delta_j'v.
\end{align*}

\subsection{Some useful lemmas}

Let us now introduce some useful lemmas which will be  frequently used later.
\begin{lemma}[Berstein's inequalities \cite{BCD}]\label{b-inequalities}
Assume that $1 \leq p \leq q \leq \infty$ and $f \in L^{p}\left(\mathbb{R}^{d}\right)$. Then it follows that
\begin{align*}
&\operatorname{Supp} \widehat{f} \subset\big\{|\xi| \leq C2^j\big\} \Rightarrow\left\|\partial^{\alpha} f\right\|_{L^q} \leq C 2^{j|\alpha|+dj\left(\frac{1}{p}-\frac{1}{q}\right)}\|f\|_{L^p}, \\
&\operatorname{Supp} \widehat{f} \subset\Big\{\frac{1}{C}2^j \leq|\xi| \leq C2^j\Big\} \Rightarrow\|f\|_{L^p} \leq C 2^{-j|\alpha|} \sup _{|\beta|=|\alpha|}\left\|\partial^{\beta} f\right\|_{L^p},
\end{align*}
where the constant $C$ is independent of $f$ and $j$.
\end{lemma}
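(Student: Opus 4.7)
The plan is to execute the standard Littlewood--Paley scaling argument: smuggle a smooth, compactly-supported spectral cutoff into $\widehat{f}$, absorb all of the multiplier into a single Schwartz kernel by dilation, and close with Young's convolution inequality. Fix once and for all two auxiliary functions $\tilde{\chi}\in C^{\infty}_{c}(\mathbb{R}^d)$ equal to $1$ on the ball $\CB$ and supported in a slightly larger ball, and $\tilde{\varphi}\in C^{\infty}_{c}(\mathbb{R}^d)$ equal to $1$ on the annulus $\CC$ and supported in a slightly larger annulus. The spectral support hypotheses then yield the identities $\widehat{f}=\tilde{\chi}(2^{-j}\cdot)\widehat{f}$ for the first inequality and $\widehat{f}=\tilde{\varphi}(2^{-j}\cdot)\widehat{f}$ for the second.

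For the first inequality, factor
\begin{align*}
\MF(\partial^{\alpha}f)(\xi)=(i\xi)^{\alpha}\tilde{\chi}(2^{-j}\xi)\widehat{f}(\xi)=2^{j|\alpha|}\psi_{\alpha}(2^{-j}\xi)\widehat{f}(\xi),
\end{align*}
with $\psi_{\alpha}(\xi)\eqdefa(i\xi)^{\alpha}\tilde{\chi}(\xi)$, so that $\psi_{\alpha}\in C^{\infty}_{c}(\mathbb{R}^d)$. Setting $h_{\alpha}\eqdefa\MF^{-1}\psi_{\alpha}\in\CS(\mathbb{R}^d)$ and inverting the Fourier transform gives
\begin{align*}
\partial^{\alpha}f(\xx)=2^{j|\alpha|}\int_{\mathbb{R}^d}2^{jd}h_{\alpha}(2^j\yy)\,f(\xx-\yy)\,\ud\yy.
\end{align*}
Choose $r\in[1,\infty]$ via $1+\tfrac{1}{q}=\tfrac{1}{r}+\tfrac{1}{p}$ and apply Young's convolution inequality; the elementary dilation identity $\|2^{jd}h_{\alpha}(2^j\cdot)\|_{L^r}=2^{jd(1-1/r)}\|h_{\alpha}\|_{L^r}=2^{jd(1/p-1/q)}\|h_{\alpha}\|_{L^r}$ (a one-line change of variables) then yields the claimed bound, with constant depending only on $\alpha$, $d$, $p$, $q$.

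For the reverse inequality, set $k\eqdefa|\alpha|$ and start from the multinomial identity $|\xi|^{2k}=\sum_{|\beta|=k}\binom{k}{\beta}\xi^{2\beta}$ together with the factorization $\xi^{2\beta}=(-i\xi)^{\beta}\cdot(i\xi)^{\beta}$. Using $\widehat{f}=\tilde{\varphi}(2^{-j}\cdot)\widehat{f}$ one may safely divide by $|\xi|^{2k}$ on the spectral support of $\widehat{f}$, because the singularity of $|\xi|^{-2k}$ at the origin is killed by $\tilde{\varphi}$. This produces
\begin{align*}
\widehat{f}(\xi)=\sum_{|\beta|=k}\binom{k}{\beta}\,\frac{\tilde{\varphi}(2^{-j}\xi)(-i\xi)^{\beta}}{|\xi|^{2k}}\,(i\xi)^{\beta}\widehat{f}(\xi)=\sum_{|\beta|=k}\binom{k}{\beta}\,2^{-jk}M_{\beta}(2^{-j}\xi)\,\MF(\partial^{\beta}f)(\xi),
\end{align*}
where $M_{\beta}(\xi)\eqdefa\tilde{\varphi}(\xi)(-i\xi)^{\beta}/|\xi|^{2k}$ is smooth and compactly supported, hence Schwartz, and the prefactor $2^{-jk}$ simply records the homogeneity degree $-k$ of the symbol. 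Inverting the Fourier transform and invoking Young's convolution inequality (with the $L^{1}$-norm of the dilated kernel being $j$-independent) produces
\begin{align*}
\|f\|_{L^p}\leq C\,2^{-j|\alpha|}\sum_{|\beta|=|\alpha|}\|\MF^{-1}M_{\beta}\|_{L^1}\|\partial^{\beta}f\|_{L^p}\leq C\,2^{-j|\alpha|}\sup_{|\beta|=|\alpha|}\|\partial^{\beta}f\|_{L^p}.
\end{align*}

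There is no substantive obstacle: the only bookkeeping items are the dilation identity for $L^r$ norms and the verification that $\psi_{\alpha}$ and $M_{\beta}$ are Schwartz, the latter being immediate from smoothness plus compact support. All constants remain $j$-independent because the auxiliary cutoffs $\tilde{\chi}$ and $\tilde{\varphi}$ are constructed once at scale $j=0$, and the only $j$-dependence enters through a pure dilation whose effect is absorbed into the explicit powers of $2^j$ stated in the inequality.
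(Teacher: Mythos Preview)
The paper does not prove this lemma at all; it is stated with a citation to \cite{BCD} and used as a black box. Your argument is correct and is essentially the standard proof one finds in that reference: insert a compactly supported Fourier multiplier adapted to the spectral support, rescale to unit scale, and close with Young's convolution inequality for the first inequality; for the second, exploit the annular support to divide by $|\xi|^{2k}$ and write the identity as a sum of smooth multipliers acting on $\partial^{\beta}f$.

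One minor bookkeeping point: you fix $\tilde{\chi}$ equal to $1$ on the ball $\CB=\{|\xi|\le 4/3\}$, but the hypothesis places $\operatorname{Supp}\widehat{f}$ in $\{|\xi|\le C2^j\}$, so to get $\widehat{f}=\tilde{\chi}(2^{-j}\cdot)\widehat{f}$ you need $\tilde{\chi}\equiv 1$ on $\{|\xi|\le C\}$, not merely on $\CB$. The same remark applies to $\tilde{\varphi}$ and the annulus $\{C^{-1}\le|\xi|\le C\}$ versus $\CC$. This costs nothing---just choose the cutoffs adapted to the given $C$ rather than to $\CB$ and $\CC$---but as written the identity $\widehat{f}=\tilde{\chi}(2^{-j}\cdot)\widehat{f}$ is not literally justified.
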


\begin{lemma}[\cite{WWZ-zju}]\label{useful-inequality}
Let $s\in(0,1)$. For any $j \geq-1$, it follows that
\begin{align*}
\left\|\Delta_{j}(f g)\right\|_{L^2} \leq& C 2^{j s}\|f\|_{B_{2, \infty}^{-s}}\|g\|_{H^{1}}+C 2^{\frac{(s+1)j}{2}}\|g\|_{L^4}\|f\|_{B_{2, \infty}^{-s}}^{\frac{1}{2}} \sum_{\left|j^{\prime}-j\right| \leq 4}\left\|\Delta_{j^{\prime}} f\right\|_{L^2}^{\frac{1}{2}},\\
\|\Delta_j(fgh)\|_{L^2}\leq&C2^{js}\big(\|f\|_{L^{\infty}}+\|\nabla f\|_{L^2}\big)\|g\|_{B^{1-s}_{2,\infty}}\|h\|_{L^2},\\
\left\|\Delta_{j}(f \nabla g h)\right\|_{L^2} \leq& C 2^{j s}\|g\|_{B_{2, \infty}^{1-s}}\Big(\|f\|_{L^{\infty}}\|h\|_{H^{1}}+\|\nabla f\|_{L^4}\|h\|_{L^4}+\left\|\nabla^{2} f\right\|_{L^2}\|h\|_{L^2}\Big) \\
&+C 2^{\frac{j s}{2}}\|f\|_{L^{\infty}}\|h\|_{L^4}\|g\|_{B_{2, \infty}^{1-s}}^{\frac{1}{2}} \sum_{l=j-9}^{j+9} 2^{\frac{l}{2}}\left\|\Delta_{l} \nabla g\right\|_{L^2}^{\frac{1}{2}}.
\end{align*}
\end{lemma}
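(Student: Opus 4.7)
The three estimates are paraproduct-type inequalities, to be established via Bony's decomposition $uv=T_uv+T_vu+R(u,v)$ combined with the Bernstein inequalities of Lemma~\ref{b-inequalities} and H\"older inequalities in dimension $d=2$. In each case, the factor $2^{js}$ arises by summing geometric series of the form $\sum_k 2^{k\lambda}$ with $\lambda$ of the correct sign, and the hypothesis $s\in(0,1)$ is exactly what guarantees convergence for both the low-frequency summation in the paraproduct pieces and the high-frequency summation in the remainder $R$. The dyadic ingredients used throughout are the Besov characterizations $\|\Delta_k f\|_{L^2}\le 2^{ks}\|f\|_{B^{-s}_{2,\infty}}$ and $\|\Delta_k g\|_{L^2}\le 2^{-k(1-s)}\|g\|_{B^{1-s}_{2,\infty}}$, together with the sharp Bernstein estimate $\|\Delta_ku\|_{L^q}\lesssim 2^{2k(1/p-1/q)}\|\Delta_ku\|_{L^p}$ in 2D.

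For the first inequality I would split $fg=T_fg+T_gf+R(f,g)$. For $T_fg=\sum_k S_{k-1}f\,\Delta_kg$ only $|k-j|\le 4$ contributes to $\Delta_j$; bounding $\|S_{k-1}f\|_{L^\infty}\lesssim \sum_{k'<k}2^{k'}\|\Delta_{k'}f\|_{L^2}\lesssim 2^{k(1+s)}\|f\|_{B^{-s}_{2,\infty}}$ and combining with $\|\Delta_kg\|_{L^2}\le 2^{-k}\|g\|_{H^1}$ yields the first term $2^{js}\|f\|_{B^{-s}_{2,\infty}}\|g\|_{H^1}$. For $T_gf$ I apply H\"older $L^4\times L^4\to L^2$, Bernstein $\|\Delta_kf\|_{L^4}\lesssim 2^{k/2}\|\Delta_kf\|_{L^2}$, and split $\|\Delta_kf\|_{L^2}\le \|\Delta_kf\|_{L^2}^{1/2}\cdot(2^{ks}\|f\|_{B^{-s}_{2,\infty}})^{1/2}$ to produce the second term. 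For the remainder $R(f,g)$ only the high-high regime $k\ge j-3$ contributes; using $\|\Delta_ju\|_{L^2}\lesssim 2^j\|u\|_{L^1}$ (Bernstein in 2D) together with H\"older $L^2\times L^2\to L^1$ yields $\|\Delta_jR(f,g)\|_{L^2}\lesssim 2^j\sum_{k\ge j-3}2^{k(s-1)}\|f\|_{B^{-s}_{2,\infty}}\|g\|_{H^1}\lesssim 2^{js}\|f\|_{B^{-s}_{2,\infty}}\|g\|_{H^1}$, the sum converging precisely because $s<1$.

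The second inequality follows by the same scheme applied to the grouping $(fh)\cdot g$: the three pieces $T_{fh}g$, $T_g(fh)$, and $R(fh,g)$ are each bounded by placing $fh$ in $L^2$ via $\|fh\|_{L^2}\le\|f\|_{L^\infty}\|h\|_{L^2}$ and exploiting $\|\Delta_kg\|_{L^\infty}\lesssim 2^{ks}\|g\|_{B^{1-s}_{2,\infty}}$ from Bernstein; the $\|\nabla f\|_{L^2}$ term absorbs the marginal frequency regimes where $L^\infty$ control of $f$ must be replaced by the 2D Gagliardo--Nirenberg interpolation $\|f\|_{L^4}\lesssim \|f\|_{L^2}^{1/2}\|\nabla f\|_{L^2}^{1/2}$. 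For the third inequality I would reduce to the first via the substitution $(f,g)\mapsto(\nabla g,fh)$, noting $\|\nabla g\|_{B^{-s}_{2,\infty}}=\|g\|_{B^{1-s}_{2,\infty}}$ and $\|\Delta_{j'}\nabla g\|_{L^2}\sim 2^{j'}\|\Delta_{j'}g\|_{L^2}$; the Leibniz expansion $\|fh\|_{H^1}\lesssim \|f\|_{L^\infty}\|h\|_{H^1}+\|\nabla f\|_{L^4}\|h\|_{L^4}+\|\nabla^2f\|_{L^2}\|h\|_{L^2}$ (the last term arising from an alternative Bernstein-type bound on $\|(\nabla f)h\|_{L^2}$ through the 2D embedding) recovers the claimed form. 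The principal technical obstacle is the delicate bookkeeping of Bernstein-scaling factors when converting $\Delta_k$-pieces between $L^2$, $L^4$, and $L^\infty$; the high-high term $R$ in (i) is the technical heart, since the Bernstein factor $2^j$ specific to 2D is exactly what balances the geometric series to yield the prescribed $2^{js}$ decay.
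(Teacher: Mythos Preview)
The paper does not supply its own proof of this lemma; it cites \cite{WWZ-zju} and remarks only that the argument ``mainly relies on the Bony's paraproduct decomposition in nonhomogeneous case.'' Your sketch follows exactly this route, and the decomposition-plus-Bernstein bookkeeping you outline is correct (indeed, your treatment of the second and third estimates yields slightly sharper bounds than stated, so the lemma follows a fortiori).
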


The proof of Lemma \ref{useful-inequality} mainly relies on the Bony's paraproduct decomposition in nonhomogeneous case. We can refer to \cite{WWZ-zju} for details.

\begin{lemma}[commutator estimate \cite{WWZ-zju}]\label{commutator}
Let $s \in(0,1)$. For any $j \geq-1$, it follows that
\begin{align*}
\left\|[\Delta_j,f]\nabla g\right\|_{L^2} \leq & C 2^{\frac{js}{2}}\|\nabla f\|_{L^4}\|g\|^{\frac{1}{2}}_{B^{-s}_{2,\infty}}\sum_{|j'-j|\leq4}2^{\frac{j'}{2}}\|\Delta_{j'}g\|^{\frac{1}{2}}_{L^2}\\
&+C2^{js}\|g\|_{B^{-s}_{2,\infty}}(\|f\|_{L^{\infty}}+\|\nabla^2f\|_{L^2}).
\end{align*}
\end{lemma}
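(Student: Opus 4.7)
The plan is to prove the commutator bound by applying Bony's nonhomogeneous paraproduct decomposition to both $f\nabla g$ and $f\,\Delta_j\nabla g$, writing each in the form $T_{\cdot}\,\cdot+T_{\cdot}\,\cdot+R(\cdot,\cdot)$ and regrouping:
\[
[\Delta_j,f]\nabla g=[\Delta_j,T_f]\nabla g+\bigl(\Delta_j T_{\nabla g}f-T_{\Delta_j\nabla g}f\bigr)+\bigl(\Delta_j R(f,\nabla g)-R(f,\Delta_j\nabla g)\bigr).
\]
Only the first piece enjoys a genuine commutator cancellation; the remaining two are estimated term by term after exploiting the sharp Fourier supports enforced by the paraproduct and remainder operators.

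For the classical piece $[\Delta_j,T_f]\nabla g=\sum_{|k-j|\le 4}[\Delta_j,S_{k-1}f]\Delta_k\nabla g$, I will use the convolution representation $\Delta_j u=h_j\ast u$ with $h_j(y)=2^{jd}h(2^jy)$ together with the first-order Taylor expansion
\[
S_{k-1}f(x-y)-S_{k-1}f(x)=-y\cdot\int_0^1\nabla S_{k-1}f(x-\theta y)\,\mathrm{d}\theta.
\]
Minkowski's inequality and an $L^4\times L^4\to L^2$ H\"older pairing, combined with $\int|y||h_j(y)|\,\mathrm{d}y\lesssim 2^{-j}$, yield $\|[\Delta_j,S_{k-1}f]\Delta_k\nabla g\|_{L^2}\lesssim 2^{-j}\|\nabla f\|_{L^4}\|\Delta_k\nabla g\|_{L^4}$. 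Bernstein in $\mathbb{R}^2$ upgrades this to $\|\Delta_k\nabla g\|_{L^4}\lesssim 2^{k/2}\|\Delta_k\nabla g\|_{L^2}\lesssim 2^{3k/2}\|\Delta_k g\|_{L^2}$, after which the asymmetric interpolation
\[
\|\Delta_k g\|_{L^2}\le\bigl(2^{-ks}\|g\|_{B^{-s}_{2,\infty}}\bigr)^{1/2}\|\Delta_k g\|_{L^2}^{1/2}
\]
distributes the Besov ceiling into the correct half-power. Summing over $|k-j|\le 4$ collapses the estimate into the first term on the right-hand side of the lemma.

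For the residual pieces, spectral support restricts the summation index $k$ to $|k-j|\le 4$ in $\Delta_j T_{\nabla g}f$, to $k\ge j+2$ in $T_{\Delta_j\nabla g}f$, and to $k\ge j-N_0$ in $\Delta_j R(f,\nabla g)-R(f,\Delta_j\nabla g)$. In each dyadic block I will alternate H\"older inequalities so as to bring in both norms of $f$: either $L^2\times L^\infty$ together with the Bernstein bound $\|\Delta_k f\|_{L^2}\lesssim 2^{-2k}\|\nabla^2 f\|_{L^2}$ for the high-frequency blocks, or $L^\infty\times L^2$ together with $\|\Delta_k f\|_{L^\infty}\le C\|f\|_{L^\infty}$ for the low-frequency blocks. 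The $g$ factor is handled by the plain bound $\|\Delta_{k'}g\|_{L^2}\le 2^{-k's}\|g\|_{B^{-s}_{2,\infty}}$, accompanied when needed by the Bernstein bounds $\|S_{k-1}(\nabla g)\|_{L^\infty}\lesssim 2^{k(2-s)}\|g\|_{B^{-s}_{2,\infty}}$ or $\|\nabla S_{k-1}g\|_{L^2}\lesssim 2^{k(1-s)}\|g\|_{B^{-s}_{2,\infty}}$. A geometric summation in $k$ (convergent because $s\in(0,1)$) collapses the resulting contributions into $C2^{js}\|g\|_{B^{-s}_{2,\infty}}(\|f\|_{L^\infty}+\|\nabla^2 f\|_{L^2})$.

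The main technical difficulty is the asymmetric half-power in the first term: the factor $\|g\|_{B^{-s}_{2,\infty}}^{1/2}\|\Delta_{j'}g\|_{L^2}^{1/2}$ must be preserved rather than being replaced by the sharper looking but ultimately useless $\|\Delta_{j'}g\|_{L^2}\le 2^{-j's}\|g\|_{B^{-s}_{2,\infty}}$, since only the asymmetric splitting is integrable in time against the dissipative control on $\|\Delta_{j'}g\|_{L^2}^2$ that becomes available once the inequality is inserted into the downstream energy estimate. A secondary subtlety is that the two index ranges in $\Delta_j T_{\nabla g}f-T_{\Delta_j\nabla g}f$ overlap on the narrow band $j+2\le k\le j+4$; these overlapping contributions must be bookkept so that the geometric decay in $k$, which makes the residual estimate close uniformly for all $s\in(0,1)$, is not accidentally lost.
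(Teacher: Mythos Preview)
The paper does not supply its own proof of this lemma; it is quoted verbatim from \cite{WWZ-zju} and used as a black box. Your sketch via Bony's nonhomogeneous paraproduct decomposition is exactly the standard route used in that reference: the genuine commutator piece $[\Delta_j,T_f]\nabla g$ is handled by the Taylor/convolution argument with an $L^4\times L^4$ H\"older pairing and Bernstein, giving the first term with the half-power splitting on $\|\Delta_{j'}g\|_{L^2}$, while the remaining $T_{\nabla g}f$ and $R(f,\nabla g)$ contributions are estimated block by block using Bernstein to land on the second term. Your bookkeeping of the frequency localization windows and your remark about preserving the asymmetric factor $\|g\|_{B^{-s}_{2,\infty}}^{1/2}\|\Delta_{j'}g\|_{L^2}^{1/2}$ (rather than collapsing it into a full Besov norm) are both correct and important for the downstream energy estimates in Section~\ref{uniqueness-section}. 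In short, there is nothing to compare against in the present paper, and your argument matches the one in the cited source.
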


The proof of uniqueness relies on the so-called Osgood-type inequality, a generalization of the Gronwall inequality. For convenience, we provide the so-called Osgood lemma (see Lemma 3.4 in \cite{BCD}).
\begin{definition}[Osgood condition \cite{BCD}]
Let $b>0$ and $\mu$ be modulus of continuity defined on $[0,b]$. We say that $\mu$ is an Osgood modulus of continuity if
\begin{align*}
    \int^b_0\frac{\ud r}{\mu(r)}=\infty.
\end{align*}
\end{definition}

\begin{lemma}[Osgood lemma \cite{BCD}]\label{Osgood-lemma}
Let $\Phi:[0,T]\rightarrow[0,\infty)$ be a measurable function and $F$ be a locally integrable function
from $[0,T]$ to $[0,\infty)$. Assume that there exists an Osgood modulus of continuity $\mu:[0,\infty)\rightarrow[0,\infty)$ such that
\begin{align*}
    \Phi(t)\leq\int^t_0F(\tau)\mu(\Phi(\tau))\ud\tau,~~\text{for almost all}~t\in[0,T],
\end{align*}
then $\Phi=0$ a.e. in $[0,T]$.
\end{lemma}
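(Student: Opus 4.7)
Denote by $\delta_\Fp := \Fp^{(1)}-\Fp^{(2)}$ and $\delta_\vv := \vv^{(1)}-\vv^{(2)}$ the differences of the two weak solutions, and subtract the two copies of (\ref{new-frame-equation-n1})--(\ref{imcompressible-v}). The resulting coupled system has right-hand sides of three types: (i) transport terms $\vv^{(i)}\!\cdot\!\nabla\delta_\Fp$, $\vv^{(i)}\!\cdot\!\nabla\delta_\vv$; (ii) multilinear couplings of $\A(\delta_\vv),\BOm(\delta_\vv)$ and $\A(\vv^{(i)}),\BOm(\vv^{(i)})$ with the basis tensors $\sss_k,\aaa_k$ evaluated on $\Fp^{(i)}$; (iii) the quasilinear elastic differences $\ML_k\CF_{Bi}[\Fp^{(1)}]-\ML_k\CF_{Bi}[\Fp^{(2)}]$, together with $\nabla\!\cdot(\sigma^{d,(1)}-\sigma^{d,(2)})$ in the momentum equation. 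A direct energy estimate in $L^2\times H^1$ is not closed because the elastic differences contain two derivatives of $\delta_\Fp$ whose principal part cannot be absorbed by the natural dissipation. Following the Littlewood--Paley strategy developed for the Ericksen--Leslie model in \cite{LTX,WWZ-zju,Xu-Zhangzf,DW}, I shall work in the weaker metric
\begin{align*}
\Phi(t) := \|\delta_\vv(\cdot,t)\|^2_{B^{-s}_{2,\infty}} + \|\delta_\Fp(\cdot,t)\|^2_{B^{1-s}_{2,\infty}},\qquad s\in(0,1),
\end{align*}
apply the dyadic block $\Delta_j$ to every equation, and close the estimate block by block.

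\textbf{Main obstacle: coercivity of the elastic dissipation on $SO(3)$.} Testing the blocked velocity equation with $\Delta_j\delta_\vv$ produces the viscous dissipation $\eta\|\nabla\Delta_j\delta_\vv\|_{L^2}^2$ after the pressure cancellation from $\nabla\!\cdot\!\delta_\vv=0$; testing the three blocked frame equations along the rotational directions $V_k$ and collecting the result through the quantities
\begin{align*}
\CH^{\Delta_j}_k := \Delta_j\bigl(\ML_k\CF_{Bi}[\Fp^{(1)}]-\ML_k\CF_{Bi}[\Fp^{(2)}]\bigr),\qquad k=1,2,3,
\end{align*}
brings the positive quadratic form $\sum_{k=1}^{3}\chi_k^{-1}\|\CH^{\Delta_j}_k\|_{L^2}^2$ to the left-hand side. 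Bounding this from below by a genuine top-order norm of $\delta_\Fp$ is the crux of the argument and the content of Lemma \ref{key-lemma}. I plan to use the pointwise orthogonal decomposition (\ref{Orth-decomp-SO3}) at each $\Fp^{(i)}$, together with the identities (\ref{import-properties}) — crucially $W_\alpha\cdot\CD\Fp=0$ for any first-order operator $\CD$ — and the equivalent form (\ref{new-elasitic-density}) of the elastic density, chosen so that the principal symbol of $\ML_k\CF_{Bi}$ is $\gamma_k$ times a Laplacian along the tangent direction $V_k$. Decomposing $\Delta\Delta_j\delta_\Fp$ along $\{V_k\}\cup\{W_\alpha\}$ and applying Cauchy--Schwarz then gives
\begin{align*}
\sum_{k=1}^{3}\frac{1}{\chi_k}\|\CH^{\Delta_j}_k\|^2_{L^2}\ \geq\ \frac{2\gamma^2}{3\chi}\,\|\Delta\Delta_j\delta_\Fp\|^2_{L^2}+\text{l.o.t.},
\end{align*}
with $\gamma:=\min_k\gamma_k$, $\chi:=\max_k\chi_k$; the normal part along $\{W_\alpha\}$ cancels to leading order and produces only lower-order remainders.

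\textbf{Nonlinear bounds and Osgood closure.} The remaining source terms — transport and its commutator $[\Delta_j,\vv^{(i)}\!\cdot\!\nabla]$, the multilinear couplings of $\A,\BOm$ with basis tensors and frame differences, and the Ericksen-stress difference $\nabla\!\cdot(\sigma^{d,(1)}-\sigma^{d,(2)})$ — are estimated using Bony's paraproduct together with the trilinear and commutator bounds of Lemmas \ref{useful-inequality} and \ref{commutator}. The auxiliary norms of $\Fp^{(i)}$ and $\vv^{(i)}$ that appear on the right-hand side (at worst $\|\nabla^2\Fp^{(i)}\|_{L^2}$ and $\|\nabla\vv^{(i)}\|_{L^2}$ in time-integrable form) are finite by the very definition of $V(0,T)\times H(0,T)$, and the partial smoothness off the finite singular set $\{T_l\}$ provided by Theorem \ref{global-posedness-theorem} rules out concentration of the coefficients. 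Multiplying the blocked velocity identity by $2^{-2js}$ and the blocked frame identity by $2^{2(1-s)j}$, taking $\sup_{j\geq -1}$, and absorbing the $\|\nabla\Delta_j\delta_\vv\|_{L^2}$ and $\|\Delta\Delta_j\delta_\Fp\|_{L^2}$ contributions into the dissipation from the previous step using (\ref{coefficient-conditions}), I arrive at an integral inequality
\begin{align*}
\Phi(t)\ \leq\ \int_0^t F(\tau)\,\mu\bigl(\Phi(\tau)\bigr)\,\ud\tau,
\end{align*}
for some $F\in L^1_{\mathrm{loc}}([0,T])$ depending on the solution norms, with $\mu(r)=r(1-\log r)$ an Osgood modulus of continuity originating from the logarithmic Brezis--Gallouet-type substitute for $L^\infty$-bounds in dimension two. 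Since $\int_0^1 \ud r/\mu(r)=\infty$ and $\Phi(0)=0$, Lemma \ref{Osgood-lemma} forces $\Phi\equiv 0$ on $[0,T]$; hence $\delta_\vv\equiv 0$ and $\delta_\Fp\equiv 0$ a.e., and the arbitrariness of $T$ completes the proof.
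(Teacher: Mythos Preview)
Your proposal does not address the stated lemma at all. Lemma~\ref{Osgood-lemma} is the abstract Osgood lemma: a self-contained measure-theoretic/ODE statement asserting that if a nonnegative measurable $\Phi$ satisfies $\Phi(t)\le\int_0^t F(\tau)\mu(\Phi(\tau))\,\ud\tau$ with $\mu$ an Osgood modulus, then $\Phi\equiv 0$. The paper does not prove it either; it is quoted from \cite{BCD} (Lemma~3.4 there), and the standard proof proceeds by contradiction, using the divergence condition $\int_0\ud r/\mu(r)=\infty$ to show that no positive value of $\Phi$ can be attained. What you have written is instead a sketch of the proof of Theorem~\ref{uniqueness-theorem}, in which the Osgood lemma is merely the final tool applied to the metric $\Phi(t)$ you construct. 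So the proposal is off target: none of the Littlewood--Paley machinery, the $SO(3)$ decomposition, or the dissipation estimate has anything to do with establishing Lemma~\ref{Osgood-lemma} itself.

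As a side remark, even read as an outline for Theorem~\ref{uniqueness-theorem}, your closure step diverges from the paper: you invoke a logarithmic modulus $\mu(r)=r(1-\log r)$ coming from a Brezis--Gallouet-type bound, whereas the paper in fact closes a \emph{linear} inequality $\Phi(t)\le C\int_0^t F(\tau)\Phi(\tau)\,\ud\tau$ (see \eqref{energy-estimate}) and applies the Osgood lemma with $\mu(r)=r$, i.e.\ essentially Gronwall. The absence of a logarithmic loss is a consequence of working in the $B^{-s}_{2,\infty}\times B^{1-s}_{2,\infty}$ scale with the product and commutator estimates of Lemmas~\ref{useful-inequality} and~\ref{commutator}, which already absorb the critical endpoint behaviour.
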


For the sake of analysis, we need to rewrite the orientational elasticity in (\ref{elastic-energy}). For any frame $\Fp=(\nn_1,\nn_2,\nn_3)\in SO(3)$, we have the following simple identity relations:
\begin{align*}
&|\nn_1\times(\nabla\times\nn_1)|^2=(\nn_2\cdot\nabla\times\nn_1)^2+(\nn_3\cdot\nabla\times\nn_1)^2,\\
&|\nn_2\times(\nabla\times\nn_2)|^2=(\nn_1\cdot\nabla\times\nn_2)^2+(\nn_3\cdot\nabla\times\nn_2)^2,\\
&|\nn_3\times(\nabla\times\nn_3)|^2=(\nn_1\cdot\nabla\times\nn_3)^2+(\nn_2\cdot\nabla\times\nn_3)^2,\\
&|\nabla\nn_i|^2=(\nabla\cdot\nn_i)^2+(\nn_i\cdot\nabla\times\nn_i)^2+|\nn_i\times(\nabla\times\nn_i)|^2\\
&\qquad\qquad+\nabla\cdot[(\nn_i\cdot\nabla)\nn_i-(\nabla\cdot\nn_i)\nn_i],\quad i=1,2,3.
\end{align*}
With
the aid of the above identity relations, the density $f_{Bi}(\Fp,\nabla\Fp)$ can be rewritten as
\begin{align}\label{new-elasitic-density}
    f_{Bi}(\Fp,\nabla\Fp)=\frac{1}{2}\sum^3_{i=1}\gamma_i|\nabla \nn_{i}|^2+W(\Fp,\nabla\Fp).
\end{align}
Here, the coefficients $\gamma_i(i=1,2,3)$ are taken as, respectively,
\begin{align}\label{gamma-123}
\left\{
    \begin{aligned}
&\gamma_1=\min\{K_1,K_4,K_7,K_{10}\}>0,~ \gamma_2=\min\{K_2,K_5,K_8,K_{11}\}>0,\\
&\gamma_3=\min\{K_3,K_6,K_9,K_{12}\}>0,
\end{aligned}
    \right.
\end{align}
and $W(\Fp,\nabla\Fp)$ is expressed by
\begin{align*}
W(\Fp,\nabla\Fp)=&\frac{1}{2}\Big(\sum^3_{i=1}k_i(\nabla\cdot\nn_i)^2+\sum^3_{i,j=1}k_{ij}(\nn_i\cdot\nabla\times\nn_j)^2\Big),
\end{align*}
where the coefficients $k_i\geq 0,k_{ij}\geq0(i,j=1,2,3)$ are given by
\begin{align}\label{ki-kij}
\left\{
    \begin{aligned}
&k_1=K_1-\gamma_1,\quad k_2=K_2-\gamma_2,\quad
k_3=K_3-\gamma_3,\\
&k_{11}=K_4-\gamma_1,\quad
k_{22}=K_5-\gamma_2,\quad
k_{33}=K_6-\gamma_3,\\
&k_{31}=K_7-\gamma_1,\quad
k_{12}=K_8-\gamma_2,\quad
k_{23}=K_9-\gamma_3,\\
&k_{21}=K_{10}-\gamma_1,\quad
k_{32}=K_{11}-\gamma_2,\quad
k_{13}=K_{12}-\gamma_3.
\end{aligned}
    \right.
\end{align}

For simplicity, we define
\begin{align*}
&(\hh_1,\hh_2,\hh_3)\eqdefa-\Big(\frac{\delta\CF_{Bi}}{\delta\nn_1},\frac{\delta\CF_{Bi}}{\delta\nn_2},\frac{\delta\CF_{Bi}}{\delta\nn_3}\Big)=-\frac{\delta\CF_{Bi}}{\delta\Fp}=\nabla\cdot\frac{\partial f_{Bi}}{\partial(\nabla\Fp)}-\frac{\partial f_{Bi}}{\partial\Fp}.
\end{align*}

Similar to Lemma 2.3 in \cite{WW}, we also introduce the algebraic structures of the  variational derivative with regards to the frame field $\Fp=(\nn_1,\nn_2,\nn_3)\in SO(3)$.

\begin{lemma}[\cite{LWX}]\label{h-decomposition}
For the terms $\hh_i(i=1,2,3)$, we have the following representation:
\begin{align*}
\hh_i=&\gamma_i\Delta\nn_i+k_i\nabla{\rm div}\nn_i
-\sum^3_{j=1}k_{ji}\nabla\times(\nabla\times\nn_i\cdot\nn^2_j)
-\sum^3_{j=1}k_{ij}(\nn_i\cdot\nabla\times\nn_j)(\nabla\times\nn_j),
\end{align*}
where $\nn^2_j=\nn_j\otimes\nn_j$ and the coefficients are expressed by \eqref{gamma-123} and \eqref{ki-kij}.
\end{lemma}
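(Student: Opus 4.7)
The plan is to prove the formula by direct variational computation, exploiting the rewritten form (\ref{new-elasitic-density}) of the density. Recall $\hh_i = -\delta\CF_{Bi}/\delta\nn_i = \nabla\cdot(\partial f_{Bi}/\partial(\nabla\nn_i)) - \partial f_{Bi}/\partial\nn_i$. I would process $f_{Bi}$ piece by piece: the Dirichlet part $\frac{1}{2}\sum_i\gamma_i|\nabla\nn_i|^2$, the divergence part $\frac{1}{2}\sum_i k_i(\nabla\cdot\nn_i)^2$, and the twist/bend-like part $\frac{1}{2}\sum_{a,b}k_{ab}(\nn_a\cdot\nabla\times\nn_b)^2$. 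Only the last is delicate.

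First I would treat the easy pieces. The Dirichlet term depends only on $\nabla\nn_i$, so its Euler--Lagrange contribution to $\hh_i$ is precisely $\gamma_i\Delta\nn_i$. For $\frac{1}{2}k_i(\nabla\cdot\nn_i)^2$, taking $\partial/\partial(\partial_l n_{i,m})$ gives $k_i(\nabla\cdot\nn_i)\delta_{lm}$, and a divergence yields the contribution $k_i\nabla\,\mathrm{div}\,\nn_i$. Neither piece couples to $\nn_j$ with $j\ne i$, matching the first two terms in the claimed decomposition.

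Next I would handle the twist/bend terms $W_{ab}=\frac{1}{2}k_{ab}Q_{ab}^2$ with $Q_{ab}=\nn_a\cdot\nabla\times\nn_b=n_{a,k}\epsilon_{klm}\partial_l n_{b,m}$. Taking the variation with respect to $\nn_c$ gives two contributions. The algebraic part $\partial W_{ab}/\partial\nn_c=\delta_{ac}k_{ab}Q_{ab}(\nabla\times\nn_b)$ is active only when $c=a$. The derivative part uses $\partial Q_{ab}/\partial(\partial_q n_{c,p})=\delta_{bc}n_{a,k}\epsilon_{kqp}$; the divergence
\begin{equation*}
\partial_q\bigl(\delta_{bc}k_{ab}Q_{ab}n_{a,k}\epsilon_{kqp}\bigr)=\delta_{bc}k_{ab}\epsilon_{kqp}\partial_q(Q_{ab}n_{a,k})
\end{equation*}
can be recognized as $-\delta_{bc}k_{ab}(\nabla\times(Q_{ab}\nn_a))_p$ after using the antisymmetry $\epsilon_{kqp}=-\epsilon_{pqk}$. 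Summing over $a,b$ with $c=i$ and renaming, the contribution to $\hh_i$ is exactly
\begin{equation*}
-\sum_{j=1}^3 k_{ji}\nabla\times\bigl((\nn_j\cdot\nabla\times\nn_i)\nn_j\bigr)-\sum_{j=1}^3 k_{ij}(\nn_i\cdot\nabla\times\nn_j)(\nabla\times\nn_j).
\end{equation*}
Finally, I would match this to the stated form by noting the tensor identity $\uu\cdot\nn_j^2=(\uu\cdot\nn_j)\nn_j$ applied with $\uu=\nabla\times\nn_i$, which gives $\nabla\times(\nabla\times\nn_i\cdot\nn_j^2)=\nabla\times((\nn_j\cdot\nabla\times\nn_i)\nn_j)$. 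Combining the three contributions yields the claimed representation.

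The main obstacle I anticipate is bookkeeping rather than conceptual: keeping the Levi--Civita indices aligned and ensuring the signs are correct when converting the divergence of the tensor $Q_{ab}n_{a,k}\epsilon_{kqp}$ into a curl of $Q_{ab}\nn_a$. The notational interpretation of $\nabla\times\nn_i\cdot\nn_j^2$ as the vector $(\nabla\times\nn_i\cdot\nn_j)\nn_j$ must also be used consistently. No constraint multipliers from $\Fp\in SO(3)$ appear, because $\hh_i$ is defined as the unconstrained variational derivative; the orthonormality enters only later through the projections $\ML_k\CF_{Bi}$.
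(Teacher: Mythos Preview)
Your computation is correct and is exactly the standard variational derivation one would expect: split $f_{Bi}$ via (\ref{new-elasitic-density}) into the Dirichlet, divergence, and $(\nn_a\cdot\nabla\times\nn_b)^2$ pieces, and compute $\nabla\cdot(\partial f_{Bi}/\partial(\nabla\nn_i))-\partial f_{Bi}/\partial\nn_i$ for each. The index and sign bookkeeping you flag is handled correctly, and the identification $\nabla\times\nn_i\cdot\nn_j^2=(\nn_j\cdot\nabla\times\nn_i)\nn_j$ is the intended reading. Note that the paper itself does not give a proof of this lemma; it is imported from \cite{LWX}, so there is no in-paper argument to compare against, but your derivation is precisely the computation underlying the cited result.
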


To handle the body force $\mathfrak{F}$ in analysis more conveniently, it is necessary to rewrite $\mathfrak{F}$ in an equivalent form. More specifically, we have the following lemma.

\begin{lemma}\label{body-force-lemma}
For any frame $\Fp=(\nn_1,\nn_2,\nn_3)\in SO(3)$, it follows that
\begin{align*}
(\mathfrak{F})_i=\sum^3_{\alpha=1}\partial_i\nn_{\alpha}\cdot\frac{\delta\CF_{Bi}}{\delta\nn_{\alpha}}\eqdefa\partial_j\sigma^d_{ij}+\partial_i\widetilde{p},
\end{align*}
where $\widetilde{p}$ can be absorbed into the pressure term $p$ in \eqref{frame-equation-v} and the elastic energy $\CF_{Bi}$ is given by \eqref{elastic-energy}, and the stress $\sigma^d_{ij}=-\frac{\partial f_{Bi}}{\partial(\partial_j\Fp)}\cdot\partial_i\Fp$.
\end{lemma}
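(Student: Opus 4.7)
The proof plan consists of two independent identities, and I would handle them separately.

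\textbf{First identity:} $\mathfrak{F}_i = \sum_{\alpha=1}^3 \partial_i\nn_\alpha \cdot \frac{\delta\CF_{Bi}}{\delta\nn_\alpha}$. The plan is to reduce the right-hand side to the form of rotational derivatives by exploiting the constraints in \eqref{import-properties}. Since each $\nn_\alpha$ is a unit vector with $\partial_i\nn_\alpha \cdot \nn_\alpha = 0$, I would decompose $\partial_i\nn_\alpha = \sum_{\beta\neq\alpha}(\partial_i\nn_\alpha\cdot\nn_\beta)\nn_\beta$, which yields
\begin{align*}
\sum_{\alpha=1}^3\partial_i\nn_\alpha\cdot\frac{\delta\CF_{Bi}}{\delta\nn_\alpha}
=\sum_{\alpha\neq\beta}(\partial_i\nn_\alpha\cdot\nn_\beta)\Big(\nn_\beta\cdot\frac{\delta\CF_{Bi}}{\delta\nn_\alpha}\Big).
\end{align*}
Grouping the six terms into pairs $(\alpha,\beta)$ and $(\beta,\alpha)$ and applying the antisymmetry relation $\partial_i\nn_\alpha\cdot\nn_\beta=-\partial_i\nn_\beta\cdot\nn_\alpha$ from \eqref{import-properties}, the pair $(1,2)\leftrightarrow(2,1)$ contributes
\begin{align*}
(\partial_i\nn_1\cdot\nn_2)\Big(\nn_2\cdot\frac{\delta\CF_{Bi}}{\delta\nn_1}-\nn_1\cdot\frac{\delta\CF_{Bi}}{\delta\nn_2}\Big)=(\partial_i\nn_1\cdot\nn_2)\ML_3\CF_{Bi},
\end{align*}
by the definition \eqref{diff-ML1-3} applied to the functional $\CF_{Bi}$. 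Summing the analogous contributions of the other two pairs reproduces $\mathfrak{F}_i$ as given in \eqref{external-force-F}.

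\textbf{Second identity:} $\sum_\alpha \partial_i\nn_\alpha\cdot\delta\CF_{Bi}/\delta\nn_\alpha = \partial_j\sigma^d_{ij}+\partial_i\widetilde{p}$. This is a standard ``divergence-form'' manipulation for variational derivatives of a local functional. Using $\delta\CF_{Bi}/\delta\nn_\alpha = \partial f_{Bi}/\partial\nn_\alpha - \partial_j(\partial f_{Bi}/\partial(\partial_j\nn_\alpha))$ and the chain rule
\begin{align*}
\partial_i f_{Bi}=\sum_{\alpha=1}^3\Big(\partial_i\nn_\alpha\cdot\frac{\partial f_{Bi}}{\partial\nn_\alpha}+\partial_i\partial_j\nn_\alpha\cdot\frac{\partial f_{Bi}}{\partial(\partial_j\nn_\alpha)}\Big),
\end{align*}
together with the product rule $\partial_i\nn_\alpha\cdot\partial_j(\partial f_{Bi}/\partial(\partial_j\nn_\alpha))=\partial_j(\partial_i\nn_\alpha\cdot\partial f_{Bi}/\partial(\partial_j\nn_\alpha))-\partial_i\partial_j\nn_\alpha\cdot\partial f_{Bi}/\partial(\partial_j\nn_\alpha)$, the mixed second-derivative terms cancel and I obtain
\begin{align*}
\sum_{\alpha=1}^3\partial_i\nn_\alpha\cdot\frac{\delta\CF_{Bi}}{\delta\nn_\alpha}=\partial_i f_{Bi}-\partial_j\Big(\partial_i\Fp\cdot\frac{\partial f_{Bi}}{\partial(\partial_j\Fp)}\Big)=\partial_j\sigma^d_{ij}+\partial_i f_{Bi},
\end{align*}
so the choice $\widetilde{p}=f_{Bi}$ closes the identity, and $\widetilde{p}$ can be absorbed into $p$ in \eqref{frame-equation-v}.

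Neither step presents a serious analytic obstacle; both are pointwise algebraic manipulations. The only thing requiring care is the bookkeeping in the first step—making sure that the six off-diagonal terms assemble correctly into the three rotational derivatives $\ML_1,\ML_2,\ML_3$ via the skew-symmetry relation—because a sign error in applying \eqref{import-properties} would break the cancellation that produces the $\ML_k$ operators. The second step is entirely routine for a Lagrangian depending on $(\Fp,\nabla\Fp)$ and proceeds exactly as the derivation of the Ericksen stress tensor in classical liquid crystal theory.
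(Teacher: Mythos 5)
Your proposal is correct. For the first identity the paper works in the opposite direction — it starts from the definition \eqref{external-force-F} of $\mathfrak{F}_i$, expands $\ML_k\CF_{Bi}$ into $\nn_\beta\cdot\delta\CF_{Bi}/\delta\nn_\alpha$ terms, regroups using the antisymmetry, and inserts the completeness relation $\nn_2\otimes\nn_2+\nn_3\otimes\nn_3=\Fi-\nn_1\otimes\nn_1$ together with $\partial_i\nn_\alpha\cdot\nn_\alpha=0$ to arrive at $\sum_\alpha\partial_i\nn_\alpha\cdot\delta\CF_{Bi}/\delta\nn_\alpha$ — while you decompose $\partial_i\nn_\alpha$ onto the orthonormal basis first and reassemble the $\ML_k$, but the algebraic content (orthonormality, completeness, antisymmetry from \eqref{import-properties}) is the same, so this is just the same argument read backwards. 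For the second identity you actually do more than the paper: the paper only writes ``$\eqdefa\partial_j\sigma^d_{ij}+\partial_i\widetilde p$'' as a definition and then records the explicit formula \eqref{sigma-d} by ``direct calculation,'' leaving the divergence-form structure implicit, whereas your Euler--Lagrange plus chain-rule plus product-rule manipulation explicitly cancels the mixed second-derivative terms, identifies $\widetilde p=f_{Bi}$, and derives $\sigma^d_{ij}=-\partial_i\Fp\cdot\partial f_{Bi}/\partial(\partial_j\Fp)$ as a consequence rather than a definition. This is the classical Ericksen-stress argument and it makes the lemma self-contained; the only thing to check, which you flagged, is the sign bookkeeping against the paper's convention $(\hh_1,\hh_2,\hh_3)=-\delta\CF_{Bi}/\delta\Fp=\nabla\cdot\partial f_{Bi}/\partial(\nabla\Fp)-\partial f_{Bi}/\partial\Fp$, which your Euler--Lagrange expression matches.
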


\begin{proof}
We will use the Kronecker $\delta$ symbol.
Recalling the definitions of $\ML_k\CF_{Bi}(k=1,2,3)$ and using (\ref{import-properties}), together with the relation $\Fi=\nn^2_1+\nn^2_2+\nn^2_3$, we can derive that
\begin{align*}
(\mathfrak{F})_i=&\,\partial_i\nn_1\cdot\nn_2\Big(\nn_2\cdot\frac{\delta\CF_{Bi}}{\delta\nn_1}-\nn_1\cdot\frac{\delta\CF_{Bi}}{\delta\nn_2}\Big)+\partial_i\nn_3\cdot\nn_1\Big(\nn_1\cdot\frac{\delta\CF_{Bi}}{\delta\nn_3}-\nn_3\cdot\frac{\delta\CF_{Bi}}{\delta\nn_1}\Big)\\
&+\partial_i\nn_2\cdot\nn_3\Big(\nn_3\cdot\frac{\delta\CF_{Bi}}{\delta\nn_2}-\nn_2\cdot\frac{\delta\CF_{Bi}}{\delta\nn_3}\Big)\\
=&\,\partial_in_{1k}(n_{2k}n_{2l}+n_{3k}n_{3l})\frac{\delta\CF_{Bi}}{\delta n_{1l}}+\partial_in_{2k}(n_{1k}n_{1l}+n_{3k}n_{3l})\frac{\delta\CF_{Bi}}{\delta n_{2l}}\\
&+\partial_in_{3k}(n_{1k}n_{1l}+n_{2k}n_{2l})\frac{\delta\CF_{Bi}}{\delta n_{3l}}\\
=&\,\partial_in_{1k}(\delta_{kl}-n_{1k}n_{1l})\frac{\delta\CF_{Bi}}{\delta n_{1l}}+\partial_in_{2k}(\delta_{kl}-n_{2k}n_{2l})\frac{\delta\CF_{Bi}}{\delta n_{2l}}\\
&+\partial_in_{3k}(\delta_{kl}-n_{3k}n_{3l})\frac{\delta\CF_{Bi}}{\delta n_{3l}}\\
=&\,\partial_in_{1k}\frac{\delta\CF_{Bi}}{\delta n_{1k}}+\partial_in_{2k}\frac{\delta\CF_{Bi}}{\delta n_{2k}}+\partial_in_{3k}\frac{\delta\CF_{Bi}}{\delta n_{3k}}\\
\eqdefa&\,\partial_j\sigma^d_{ij}+\partial_i\widetilde{p},
\end{align*}
where $\sigma_{ij}=-\sum^3_{\alpha=1}\frac{\partial f_{Bi}}{\partial(\partial_j\nn_{\alpha})}\cdot\partial_i\nn_{\alpha}$.
Moreover, by a direct calculation,  we obtain
\begin{align}\label{sigma-d}
\sigma_{ij}^d(\nabla\Fp,\Fp) &=-\sum_{\alpha=1}^3\gamma_{\alpha}\partial_jn_{\alpha k}\partial_in_{\alpha k}-\sum_{\alpha=1}^3 k_{\alpha}(\nabla\cdot\nn_{\alpha})\partial_in_{\alpha j}\nonumber\\
&\quad-\sum_{\alpha,\beta=1}^3 k_{\beta\alpha}\Big[(\partial_jn_{\alpha p}-\partial_pn_{\alpha j})\partial_in_{\alpha p}+n_{\beta j}n_{\beta l}(\partial_pn_{\alpha l}-\partial_ln_{\alpha p})\partial_in_{\alpha p}\nonumber\\
&\quad+n_{\beta p}n_{\beta l}(\partial_ln_{\alpha j}-\partial_jn_{\alpha l})\partial_in_{\alpha p}\Big].
\end{align}
\end{proof}

\section{Uniqueness of weak solutions}\label{uniqueness-section}

This section is devoted to the proof of the uniqueness of weak solutions in the class $V(0,T)\times H(0,T)$.
Let $(\Fp^{(1)},\vv^{(1)})$ and $(\Fp^{(2)},\vv^{(2)})$ be two weak solutions of the frame hydrodynamic system (\ref{new-frame-equation-n1})--(\ref{imcompressible-v}) with the same initial data $(\Fp_0,\vv_0)$, where the orthonormal frames $\Fp^{(i)}=\big(\nn^{(i)}_1,\nn^{(i)}_2,\nn^{(i)}_3\big) (i=1,2)$.

We denote
\begin{align*}
&\delta_{\nn_j}=\nn^{(1)}_j-\nn^{(2)}_j,\quad \delta_{\hh_j}=\hh^{(1)}_j-\hh^{(2)}_j,\quad \delta_{\aaa_j}=\aaa^{(1)}_j-\aaa^{(2)}_j,~~ (j=1,2,3),\\
&\delta_{\vv}=\vv^{(1)}-\vv^{(2)},\quad \delta_{\A}=\A^{(1)}-\A^{(2)},\quad \delta_{\BOm}=\BOm^{(1)}-\BOm^{(2)},\quad \delta_p=p^{(1)}-p^{(2)},\\
&\delta_{\ML_1}=\nn^{(1)}_2\cdot\delta_{\hh_3}-\nn^{(1)}_3\cdot\delta_{\hh_2},\quad\delta_{\ML_2}=\nn^{(1)}_3\cdot\delta_{\hh_1}-\nn^{(1)}_1\cdot\delta_{\hh_3},\\
&\delta_{\ML_3}=\nn^{(1)}_1\cdot\delta_{\hh_2}-\nn^{(1)}_2\cdot\delta_{\hh_1},\quad \delta_{\sss_k}=\sss^{(1)}_k-\sss^{(2)}_k,~~ (k=1,\cdots,5).
\end{align*}
By taking the difference between the equations for $\big(\Fp^{(1)},\vv^{(1)}\big)$ and $\big(\Fp^{(2)},\vv^{(2)}\big)$, we find that
\begin{align}
\frac{\partial\delta_{\nn_1}}{\partial t}=&\Big(\frac{1}{2}\delta_{\BOm}\cdot\aaa^{(1)}_1+\frac{\eta_3}{\chi_3}\delta_{\A}\cdot\sss^{(1)}_3-\frac{1}{\chi_3}\delta_{\ML_3}\Big)\nn^{(1)}_2\nonumber\\
&-\Big(\frac{1}{2}\delta_{\BOm}\cdot\aaa^{(1)}_2+\frac{\eta_2}{\chi_2}\delta_{\A}\cdot\sss^{(1)}_4-\frac{1}{\chi_2}\delta_{\ML_2}\Big)\nn^{(1)}_3+\delta_{\FF_1},\label{n1-uniquence-equ}\\
\frac{\partial\delta_{\nn_2}}{\partial t}=&-\Big(\frac{1}{2}\delta_{\BOm}\cdot\aaa^{(1)}_1+\frac{\eta_3}{\chi_3}\delta_{\A}\cdot\sss^{(1)}_3-\frac{1}{\chi_3}\delta_{\ML_3}\Big)\nn^{(1)}_1\nonumber\\
&+\Big(\frac{1}{2}\delta_{\BOm}\cdot\aaa^{(1)}_3+\frac{\eta_1}{\chi_1}\delta_{\A}\cdot\sss^{(1)}_5-\frac{1}{\chi_1}\delta_{\ML_1}\Big)\nn^{(1)}_3+\delta_{\FF_2},\label{n2-uniquence-equ}\\
\frac{\partial\delta_{\nn_3}}{\partial t}=&\Big(\frac{1}{2}\delta_{\BOm}\cdot\aaa^{(1)}_2+\frac{\eta_2}{\chi_2}\delta_{\A}\cdot\sss^{(1)}_4-\frac{1}{\chi_2}\delta_{\ML_2}\Big)\nn^{(1)}_1\nonumber\\
&-\Big(\frac{1}{2}\delta_{\BOm}\cdot\aaa^{(1)}_3+\frac{\eta_1}{\chi_1}\delta_{\A}\cdot\sss^{(1)}_5-\frac{1}{\chi_1}\delta_{\ML_1}\Big)\nn^{(1)}_2+\delta_{\FF_3},\label{n3-uniquence-equ}\\
\frac{\partial\delta_{\vv}}{\partial t}=&-\nabla\delta_{p}+\eta\Delta\delta_{\vv}+\nabla\cdot\big(\sigma(\Fp^{(1)},\delta_{\vv})+\sigma^d(\nabla\delta_{\Fp})\big)+\nabla\cdot\delta_{\FF_4},\label{v-uniquence-equ}\\
\nabla\cdot\delta_{\vv}=&0,\label{incomp-uniquence-equ}
\end{align}
where the stresses $\sigma(\Fp^{(1)},\delta_{\vv})$ and $\sigma^d(\nabla\delta_{\Fp})$ are expressed by
\begin{align*}
\sigma(\Fp^{(1)},\delta_{\vv})=&\beta_1(\delta_{\A}\cdot\sss^{(1)}_1)\sss^{(1)}_1+\beta_0(\delta_{\A}\cdot\sss^{(1)}_2)\sss^{(1)}_1+\beta_0(\delta_{\A}\cdot\sss^{(1)}_1)\sss^{(1)}_2\\
&+\beta_2(\delta_{\A}\cdot\sss^{(1)}_2)\sss^{(1)}_2+\Big(\beta_3-\frac{\eta^2_3}{\chi_3}\Big)(\delta_{\A}\cdot\sss^{(1)}_3)\sss^{(1)}_3+\Big(\beta_4-\frac{\eta^2_2}{\chi_2}\Big)(\delta_{\A}\cdot\sss^{(1)}_4)\sss^{(1)}_4\\
&+\Big(\beta_5-\frac{\eta^2_1}{\chi_1}\Big)(\delta_{\A}\cdot\sss^{(1)}_5)\sss^{(1)}_5+\frac{\eta_3}{\chi_3}\sss^{(1)}_3\delta_{\ML_3}+\frac{\eta_2}{\chi_2}\sss^{(1)}_4\delta_{\ML_2}+\frac{\eta_1}{\chi_1}\sss^{(1)}_5\delta_{\ML_1}\\
&-\frac{1}{2}\big(\aaa^{(1)}_1\delta_{\ML_3}+\aaa^{(1)}_2\delta_{\ML_2}+\aaa^{(1)}_3\delta_{\ML_1}\big),\\
\sigma^d(\nabla\delta_{\Fp})=&\sigma^d(\nabla\Fp^{(1)},\Fp^{(1)})-\sigma^d(\nabla\Fp^{(2)},\Fp^{(2)}).
\end{align*}
Moreover, $\delta_{\FF_i}(i=1,\cdots,5)$ can be given by
\begin{align*}
\delta_{\FF_1}=&\Big(\frac{1}{2}\BOm^{(2)}\cdot\delta_{\aaa_1}+\frac{\eta_3}{\chi_3}\A^{(2)}\cdot\delta_{\sss_3}-\frac{1}{\chi_3}(\delta_{\nn_1}\cdot\hh^{(2)}_2-\delta_{\nn_2}\cdot\hh^{(2)}_1)\Big)\nn^{(1)}_2\nonumber\\
&+\Big(\frac{1}{2}\BOm^{(2)}\cdot\aaa^{(2)}_1+\frac{\eta_3}{\chi_3}\A^{(2)}\cdot\sss^{(2)}_3-\frac{1}{\chi_3}\ML^{(2)}_3\CF_{Bi}\Big)\delta_{\nn_2}\nonumber\\
&-\Big(\frac{1}{2}\BOm^{(2)}\cdot\delta_{\aaa_2}+\frac{\eta_2}{\chi_2}\A^{(2)}\cdot\delta_{\sss_4}-\frac{1}{\chi_2}(\delta_{\nn_3}\cdot\hh^{(2)}_1-\delta_{\nn_1}\cdot\hh^{(2)}_3)\Big)\nn^{(1)}_3\nonumber\\
&-\Big(\frac{1}{2}\BOm^{(2)}\cdot\aaa^{(2)}_2+\frac{\eta_2}{\chi_2}\A^{(2)}\cdot\sss^{(2)}_4-\frac{1}{\chi_2}\ML^{(2)}_2\CF_{Bi}\Big)\delta_{\nn_3}\\
&-\vv^{(1)}\cdot\nabla\delta_{\nn_1}-\delta_{\vv}\cdot\nabla\nn^{(2)}_1,\\
\delta_{\FF_2}=&-\Big(\frac{1}{2}\BOm^{(2)}\cdot\delta_{\aaa_1}+\frac{\eta_3}{\chi_3}\A^{(2)}\cdot\delta_{\sss_3}-\frac{1}{\chi_3}(\delta_{\nn_1}\cdot\hh^{(2)}_2-\delta_{\nn_2}\cdot\hh^{(2)}_1)\Big)\nn^{(1)}_1\nonumber\\
&-\Big(\frac{1}{2}\BOm^{(2)}\cdot\aaa^{(2)}_1+\frac{\eta_3}{\chi_3}\A^{(2)}\cdot\sss^{(2)}_3-\frac{1}{\chi_3}\ML^{(2)}_3\CF_{Bi}\Big)\delta_{\nn_1}\nonumber\\
&+\Big(\frac{1}{2}\BOm^{(2)}\cdot\delta_{\aaa_3}+\frac{\eta_1}{\chi_1}\A^{(2)}\cdot\delta_{\sss_5}-\frac{1}{\chi_1}(\delta_{\nn_2}\cdot\hh^{(2)}_3-\delta_{\nn_3}\cdot\hh^{(2)}_2)\Big)\nn^{(1)}_3\nonumber\\
&+\Big(\frac{1}{2}\BOm^{(2)}\cdot\aaa^{(2)}_3+\frac{\eta_1}{\chi_1}\A^{(2)}\cdot\sss^{(2)}_5-\frac{1}{\chi_1}\ML^{(2)}_1\CF_{Bi}\Big)\delta_{\nn_3}\\
&-\vv^{(1)}\cdot\nabla\delta_{\nn_2}-\delta_{\vv}\cdot\nabla\nn^{(2)}_2,\\
\delta_{\FF_3}=&\Big(\frac{1}{2}\BOm^{(2)}\cdot\delta_{\aaa_2}+\frac{\eta_2}{\chi_2}\A^{(2)}\cdot\delta_{\sss_4}-\frac{1}{\chi_2}(\delta_{\nn_3}\cdot\hh^{(2)}_1-\delta_{\nn_1}\cdot\hh^{(2)}_3)\Big)\nn^{(1)}_1\nonumber\\
&+\Big(\frac{1}{2}\BOm^{(2)}\cdot\aaa^{(2)}_2+\frac{\eta_2}{\chi_2}\A^{(2)}\cdot\sss^{(2)}_4-\frac{1}{\chi_2}\ML^{(2)}_2\CF_{Bi}\Big)\delta_{\nn_1}\nonumber\\
&-\Big(\frac{1}{2}\BOm^{(2)}\cdot\delta_{\aaa_3}+\frac{\eta_1}{\chi_1}\A^{(2)}\cdot\delta_{\sss_5}-\frac{1}{\chi_1}(\delta_{\nn_2}\cdot\hh^{(2)}_3-\delta_{\nn_3}\cdot\hh^{(2)}_2)\Big)\nn^{(1)}_2\nonumber\\
&-\Big(\frac{1}{2}\BOm^{(2)}\cdot\aaa^{(2)}_3+\frac{\eta_1}{\chi_1}\A^{(2)}\cdot\sss^{(2)}_5-\frac{1}{\chi_1}\ML^{(2)}_1\CF_{Bi}\Big)\delta_{\nn_2}\\
&-\vv^{(1)}\cdot\nabla\delta_{\nn_1}-\delta_{\vv}\cdot\nabla\nn^{(2)}_1,\\
\delta_{\FF_4}=&-\vv^{(1)}\otimes\delta_{\vv}-\delta_{\vv}\otimes\vv^{(2)}
+\beta_1\big((\A^{(2)}\cdot\delta_{\sss_1})\sss^{(1)}_1+(\A^{(2)}\cdot\sss^{(2)}_1)\delta_{\sss_1}\big)\\
&+\beta_0\big((\A^{(2)}\cdot\delta_{\sss_2})\sss^{(1)}_1+(\A^{(2)}\cdot\sss^{(2)}_2)\delta_{\sss_1}\big)
+\beta_0\big((\A^{(2)}\cdot\delta_{\sss_1})\sss^{(1)}_2+(\A^{(2)}\cdot\sss^{(2)}_1)\delta_{\sss_2}\big)\\
&+\beta_2\big((\A^{(2)}\cdot\delta_{\sss_2})\sss^{(1)}_2+(\A^{(2)}\cdot\sss^{(2)}_2)\delta_{\sss_2}\big)
+\Big(\beta_3-\frac{\eta^2_3}{\chi_3}\Big)\big((\A^{(2)}\cdot\delta_{\sss_3})\sss^{(1)}_3+(\A^{(2)}\cdot\sss^{(2)}_3)\delta_{\sss_3}\big)\\
&+\Big(\beta_4-\frac{\eta^2_2}{\chi_2}\Big)\big((\A^{(2)}\cdot\delta_{\sss_4})\sss^{(1)}_4+(\A^{(2)}\cdot\sss^{(2)}_4)\delta_{\sss_4}\big)\\
&+\Big(\beta_5-\frac{\eta^2_1}{\chi_1}\Big)\big((\A^{(2)}\cdot\delta_{\sss_5})\sss^{(1)}_5+(\A^{(2)}\cdot\sss^{(2)}_5)\delta_{\sss_5}\big)\\
&+\frac{\eta_3}{\chi_3}\big(\sss^{(1)}_3(\delta_{\nn_1}\cdot\hh^{(2)}_2-\delta_{\nn_2}\cdot\hh^{(2)}_1)+\delta_{\sss_3}(\ML^{(2)}_3\CF_{Bi})\big)\\
&+\frac{\eta_2}{\chi_2}\big(\sss^{(1)}_4(\delta_{\nn_3}\cdot\hh^{(2)}_1-\delta_{\nn_1}\cdot\hh^{(2)}_3)+\delta_{\sss_4}(\ML^{(2)}_2\CF_{Bi})\big)\\
&+\frac{\eta_1}{\chi_1}\big(\sss^{(1)}_5(\delta_{\nn_2}\cdot\hh^{(2)}_3-\delta_{\nn_3}\cdot\hh^{(2)}_2)+\delta_{\sss_5}(\ML^{(2)}_1\CF_{Bi})\big)\\
&-\frac{1}{2}\big(\aaa^{(1)}_1(\delta_{\nn_1}\cdot\hh^{(2)}_2-\delta_{\nn_2}\cdot\hh^{(2)}_1)+\delta_{\aaa_1}(\ML^{(2)}_3\CF_{Bi})\big)\\
&-\frac{1}{2}\big(\aaa^{(1)}_2(\delta_{\nn_3}\cdot\hh^{(2)}_1-\delta_{\nn_1}\cdot\hh^{(2)}_3)+\delta_{\aaa_2}(\ML^{(2)}_2\CF_{Bi})\big)\\
&-\frac{1}{2}\big(\aaa^{(1)}_3(\delta_{\nn_2}\cdot\hh^{(2)}_3-\delta_{\nn_3}\cdot\hh^{(2)}_2)+\delta_{\aaa_3}(\ML^{(2)}_1\CF_{Bi})\big),
\end{align*}
where $\ML^{(2)}_k\CF_{Bi}(k=1,2,3)$ are denoted as, respectively,
\begin{align*}
&\ML^{(2)}_1\CF_{Bi}=\nn^{(2)}_2\cdot\hh^{(2)}_3-\nn^{(2)}_3\cdot\hh^{(2)}_2,~~\ML^{(2)}_2\CF_{Bi}=\nn^{(2)}_3\cdot\hh^{(2)}_1-\nn^{(2)}_1\cdot\hh^{(2)}_3,\\
&\ML^{(2)}_3\CF_{Bi}=\nn^{(2)}_1\cdot\hh^{(2)}_2-\nn^{(2)}_2\cdot\hh^{(2)}_1.
\end{align*}

Before presenting the proof the uniqueness, we need to discuss the suitable choice of the energy metric for the resulting system of the difference.
A usual idea to prove the uniqueness is to derive the energy estimates for the system (\ref{n1-uniquence-equ})--(\ref{incomp-uniquence-equ}) based on the following natural energy metric:
\begin{align*}
\|\delta_{\vv}(t)\|^2_{L^2}+\sum^3_{k=1}\|\nabla\delta_{\nn_k}(t)\|^2_{L^2}.
\end{align*}
However, in the sense of weak solutions, the above approach will no longer be available, since we would encounter nonlinear terms like $\int_{\mathbb{R}^2}\delta_{\FF_k}\cdot\Delta\delta_{\nn_k}\ud\xx (k=1,2,3)$, which cannot result in a closed energy estimate by using the embedding inequalities. A similar problem has been discussed in \cite{Xu-Zhangzf,LTX,WWZ-zju} for the Ericksen--Lesile model.

As a consequence, a key ingredient to control the difference $(\delta_{\Fp},\delta_{\vv})$ in the class $V(0,T)\times H(0,T)$ is to introduce a suitable weaker metric:
\begin{align}\label{Phi-t-metric}
\Phi(t)\eqdefa \CV(t)+\CU(t),
\end{align}
where for some $s\in (0,1/2)$, $\CV(t)$ and $\CU(t)$ are given by, respectively,
\begin{align*}
&\CV(t)=\sup\limits_{j \geq -1}2^{-2sj}\|\Delta_j\delta_{\vv}(t)\|_{L^2}^2,\quad \CU(t)= \sup\limits_{j \geq -1}2^{2(1-s)j}\|\Delta_j\delta_{\Fp}(t)\|_{L^2}^2,\\
&\|\Delta_j\delta_{\Fp}\|^2_{L^2}=\sum^3_{k=1}\|\Delta_j\delta_{\nn_k}\|^2_{L^2},~\text{for $j\ge -1$},\quad \|\Delta_{-1}\delta_{\Fp}\|_{L^2}^2=\sum\limits^3_{k=1}\|\Delta_{-1}\delta_{\nn_k}\|_{L^2}^2.
\end{align*}
Here $\Delta_j$ is the Littlewood--Paley operator defined in Subsection \ref{Little-Paley-subsect}.
Again, we introduce the following functional $\CW(t)$:
\begin{align*}
\CW(t)=\sup\limits_{j \geq -1}2^{-2sj}\int_{\mathbb{R}^2}W^j(\xx,t)\ud\xx+\|\Delta_{-1}\delta_{\Fp}(t)\|_{L^2}^2,
\end{align*}
where $W^j(\xx,t)$ is defined as
\begin{align}\label{Wj-xt}
W^j(\xx,t)\eqdefa\frac{1}{2}\Big(\sum_{i=1}^{3}\gamma_i\|\nabla\Delta_j\delta_{\nn_i}\|_{L^2}^2+\sum_{i=1}^{3}k_i\|\nabla\cdot\Delta_j\delta_{\nn_i}\|_{L^2}^2+\sum_{i,j=1}^{3}k_{ij}\|\nn_i^{(1)}\cdot\nabla\times\Delta_j\delta_{\nn_j}\|_{L^2}^2\Big),
\end{align}
where the coefficients $\gamma_i>0, k_i,k_{ij}\geq0 (i,j=1,2,3)$ are expressed by \eqref{gamma-123} and \eqref{ki-kij}.

Moreover, to close the energy estimate for the difference $(\delta_{\Fp},\delta_{\vv})$, we also need the following simple relation:
\begin{align}\label{Wj-relation}
\int_{\mathbb{R}^2}W^j(\xx,t)\ud\xx+\|\Delta_{-1}\delta_{\Fp}\|^2_{L^2}\geq c2^{2j}\|\Delta_j\delta_{\Fp}\|^2_{L^2},
\end{align}
which implies that $\CW(t)\geq c~\CU(t)$ for some constant $c>0$.

In order to present the proof of Theorem \ref{uniqueness-theorem} conveniently, we introduce a locally integrable function on $[0,T]$ as follows:
\begin{align*}
F(t)\eqdefa &1+\|(\nabla\vv^{(1)},\nabla\vv^{(2)})\|_{L^2}^2+\|(\vv^{(1)},\vv^{(2)})\|_{L^4}^4+\|(\nabla\Fp^{(1)},\nabla\Fp^{(2)})\|_{L^4}^4\\
&+\|\partial_t\Fp^{(1)}\|_{L^2}^2+\|(\nabla\Fp^{(1)},\nabla\Fp^{(2)})\|_{H^1}^2+\|(\nabla^2\Fp^{(1)},\nabla^2\Fp^{(2)})\|^2_{L^2},
\end{align*}
where $\Fp^{(\alpha)}=\big(\nn^{(\alpha)}_1,\nn^{(\alpha)}_2,\nn^{(\alpha)}_3\big)(\alpha=1,2)$.

We now turn to the poof of the uniqueness of weak solutions in $\mathbb{R}^2$. The proof will be divided into three subsequent subsections.

\subsection{Estimate for the velocity $\delta_{\vv}$}

We first notice that from Lemma \ref{b-inequalities}, there exists a constant $c>0$ such that
\begin{align}\label{nabla-vv-jvv}
ca_j2^{2j}\|\Delta_j\delta_{\vv}\|_{L^2}^2\leq\|\Delta_j\nabla\delta_{\vv}\|_{L^2}^2,
\end{align}
where $a_j=1$ for $j\geq0$ and $a_j=0$ for $j=-1$.

Applying the operator $\Delta_j$ to both sides of the equation (\ref{v-uniquence-equ}) and taking the inner product with $\Delta_j\delta_{\vv}$, together with (\ref{nabla-vv-jvv}),
we can deduce that
\begin{align}\label{difference-vv-L2}
&\frac{1}{2}\frac{\ud}{\ud t}\|\Delta_j\delta_{\vv}\|_{L^2}^2+ca_j2^{2j}\|\Delta_j\delta_{\vv}\|_{L^2}^2\nonumber\\
&\quad\leq-\bigg(\beta_1\|\sss^{(1)}_1\cdot\Delta_j\delta_{\A}\|_{L^2}^2+2\beta_0\int_{\mathbb{R}^2}(\Delta_j\delta_{\A}\cdot\sss^{(1)}_1)(\Delta_j\delta_{\A}\cdot\sss^{(1)}_2)\ud\xx+\beta_2\|\Delta_j\delta_{\A}\cdot\sss^{(1)}_2\|_{L^2}^2\bigg)\nonumber\\
&\qquad-\Big(\beta_3-\frac{\eta_{3}^2}{\chi_3}\Big)\|\Delta_j\delta_{\A}\cdot\sss^{(1)}_3\|_{L^2}^2-\Big(\beta_4-\frac{\eta_{2}^2}{\chi_2}\Big)\|\Delta_j\delta_{\A}\cdot\sss^{(1)}_4\|_{L^2}^2-\Big(\beta_5-\frac{\eta_{1}^2}{\chi_1}\Big)\|\Delta_j\delta_{\A}\cdot\sss^{(1)}_5\|_{L^2}^2\nonumber\\
&\qquad-\Big\langle\frac{\eta_3}{\chi_3}\sss^{(1)}_3\mathcal{H}^{\Delta_j}_3+\frac{\eta_2}{\chi_2}\sss^{(1)}_4\mathcal{H}^{\Delta_j}_2+\frac{\eta_1}{\chi_1}\sss^{(1)}_5\mathcal{H}^{\Delta_j}_1,\Delta_j\delta_{\A}\Big\rangle\nonumber\\
&\qquad+\Big\langle\frac{1}{2}\big(\aaa^{(1)}_1\mathcal{H}^{\Delta_j}_3+\aaa^{(1)}_2\mathcal{H}^{\Delta_j}_2+\aaa^{(1)}_3\mathcal{H}^{\Delta_j}_1\big),\Delta_j\delta_{\BOm}\Big\rangle\nonumber\\
&\qquad+\underbrace{\big\langle[\Delta_j,\mathcal{P}]\nabla\delta_{\vv},\Delta_j\nabla\delta_{\vv}\big\rangle}_{I_1}+\underbrace{\sum_{i=1}^{3}\big\langle[\Delta_j,\mathcal{P}]\delta_{\hh_i},\Delta_j\nabla\delta_{\vv}\big\rangle}_{I_2}\nonumber\\
&\qquad\underbrace{-\big\langle\Delta_j\sigma^d(\nabla\delta_{\Fp}),\Delta_j\nabla\delta_{\vv}\big\rangle}_{I_3}\underbrace{-\big\langle\Delta_j\delta_{\FF_4},\Delta_j\nabla\delta_{\vv}\big\rangle}_{I_4},
\end{align}
where the symbol $\CP$ stands for a polynomial function of  $(\Fp^{(1)},\Fp^{(2)})$ with degree no more than four, and we have used the relation $\Delta_j\nabla\delta_{\vv}=\Delta_j\delta_{\A}+\Delta_j\delta_{\BOm}$. Again, in (\ref{difference-vv-L2}) $\CH^{\Delta_j}_k(k=1,2,3)$ are defined as, respectively,
\begin{align}\label{H-Delta-j}
\left\{
\begin{aligned}
&\mathcal{H}^{\Delta_j}_1\eqdefa\nn^{(1)}_2\cdot\Delta_j\delta_{\hh_3}-\nn^{(1)}_3\cdot\Delta_j\delta_{\hh_2},\quad \mathcal{H}^{\Delta_j}_2\eqdefa\nn^{(1)}_3\cdot\Delta_j\delta_{\hh_1}-\nn^{(1)}_1\cdot\Delta_j\delta_{\hh_3},\\
&\mathcal{H}^{\Delta_j}_3\eqdefa\nn^{(1)}_1\cdot\Delta_j\delta_{\hh_2}-\nn^{(1)}_2\cdot\Delta_j\delta_{\hh_1}.
\end{aligned}
    \right.
\end{align}

For the term $I_1$, applying Lemma \ref{commutator} yields
\begin{align*}
I_1\leq& C\Big(2^{\frac{js}{2}}\|\nabla\CP\|_{L^4}\CV(t)^{\frac{1}{4}}\sum_{|j'-j|\leq4}2^{\frac{j'}{2}}\|\Delta_{j'}\delta_{\vv}\|^{\frac{1}{2}}_{L^2}\Big)\|\Delta_j\nabla\delta_{\vv}\|_{L^2}\nonumber\\
&+C2^{js}\CV(t)^{\frac{1}{2}}(1+\|\nabla^2\CP\|_{L^2})\|\Delta_j\nabla\delta_{\vv}\|_{L^2}\nonumber\\
\leq& C2^{2js}F(t)\CV(t)+\ve\sum_{|j'-j|\leq 4}2^{2j'}\|\Delta_{j'}\delta_{\vv}\|_{L^2}^2+\ve2^{2j}\|\Delta_j\delta_{\vv}\|_{L^2}^2,
\end{align*}
where we have employed the following simple facts:
\begin{align*}
&\|\nabla\CP\|_{L^4}\leq C\|(\nabla\Fp^{(1)},\nabla\Fp^{(2)})\|_{L^4},\quad \|(\vv^{(1)},\vv^{(2)})\|^2_{L^2}+\|(\nabla\Fp^{(1)},\nabla\Fp^{(2)})\|^2_{L^2}\leq C,\\
&\|\nabla^2\CP\|_{L^2}\leq C\|(\nabla\Fp^{(1)},\nabla\Fp^{(2)})\|^2_{L^4}+C\|(\nabla^2\Fp^{(1)},\nabla^2\Fp^{(2)})\|_{L^2}.
\end{align*}
In order to estimate the term $I_2$, it is necessary to express the difference $\delta_{\hh_i}(i=1,2,3)$ by introducing the short notation. Specifically, using Lemma \ref{h-decomposition}, we have
\begin{align}\label{delta-hh-i}
\delta_{\hh_i}=&\gamma_i\Delta\delta_{\nn_i}+k_i\nabla{\rm div}\delta_{\nn_i}-\sum^3_{j=1}k_{ji}\nabla\times\big(\nabla\times\delta_{\nn_i}\cdot\nn^{(1)}_j\otimes\nn^{(1)}_j\big)\nonumber\\
&-\sum^3_{j=1}k_{ji}\nabla\times\big(\nabla\times\nn^{(2)}_i\cdot\delta_{\nn_j}\otimes\nn^{(1)}_j\big)-\sum^3_{j=1}k_{ji}\nabla\times\big(\nabla\times\nn^{(2)}_i\cdot\nn^{(2)}_j\otimes\delta_{\nn_j}\big)\nonumber\\
&-\sum^3_{j=1}k_{ij}\big(\delta_{\nn_i}\cdot\nabla\times\nn^{(1)}_j\big)(\nabla\times\nn^{(1)}_j)-\sum^3_{j=1}k_{ij}\big(\nn^{(2)}_i\cdot\nabla\times\delta_{\nn_j}\big)(\nabla\times\nn^{(1)}_j)\nonumber\\
&-\sum^3_{j=1}k_{ij}\big(\nn^{(2)}_i\cdot\nabla\times\nn^{(2)}_j\big)(\nabla\times\delta_{\nn_j})\nonumber\\
\eqdefa&\CP\nabla^2\delta_{\nn_i}+\sum^3_{j=1}\CP\nabla^2\nn^{(2)}_i\delta_{\nn_j}+\sum^3_{j=1}\CP\nabla\nn^{(1)}_j\nabla\delta_{\nn_i}+\sum^3_{j=1}\CP\nabla\nn^{(2)}_i\nabla\delta_{\nn_j}\nonumber\\
&+\sum^3_{j=1}\sum^2_{\alpha=1}\CP\nabla\nn^{(\alpha)}_j\nabla\delta_{\nn_j}+\sum^3_{j=1}\sum^2_{\alpha=1}\CP\nabla\nn^{(2)}_i\nabla\nn^{(\alpha)}_j\delta_{\nn_j}+\sum^3_{j=1}\CP\nabla\nn^{(1)}_j\nabla\nn^{(1)}_j\delta_{\nn_i}.
\end{align}
Then, by using Lemma \ref{useful-inequality}, Lemma \ref{commutator} and the Sobolev's  embedding $H^1(\mathbb{R}^2)\hookrightarrow L^4(\mathbb{R}^2)$, we deduce from (\ref{delta-hh-i}) that
\begin{align*}
I_2\leq&C\sum_{i=1}^3\bigg[\|[\Delta_j,\CP^2]\nabla^2\delta_{\nn_i}\|_{L^2}+\sum^3_{k=1}\bigg(\|\Delta_j\big(\CP^2\nabla^2\nn_i^{(2)}\delta_{\nn_k}\big)\|_{L^2}+\|\Delta_j\big(\CP^2\nabla\nn^{(1)}_k\nabla\delta_{\nn_i}\big)\|_{L^2}\nonumber\\
&+\|\Delta_j\big(\CP^2\nabla\nn^{(2)}_i\nabla\delta_{\nn_k}\big)\|_{L^2}+\sum^2_{\alpha=1}\|\Delta_j\big(\CP^2\nabla\nn_i^{(\alpha)}\nabla\delta_{\nn_k}\big)\|_{L^2}\nonumber\\
&+\sum^2_{\alpha=1}\|\Delta_j\big(\CP^2\nabla\nn^{(2)}_i\nabla\nn^{(\alpha)}_k\delta_{\nn_k}\big)\|_{L^2}+\|\Delta_j\big(\CP^2\nabla\nn^{(1)}_k\nabla\nn^{(1)}_k\delta_{\nn_i}\big)\|_{L^2}\bigg)\bigg]\|\Delta_j\nabla\delta_{\vv}\|_{L^2}\nonumber\\
\leq&C2^{\frac{js}{2}}\Big(1+\|\nabla\CP^2\|_{L^4}+\|(\nabla\Fp^{(1)},\nabla\Fp^{(2)})\|_{L^4}\Big)\CU^{\frac{1}{4}}(t)\sum_{l=j-9}^{j+9}2^{\frac{l}{2}}\|\Delta_l\nabla\delta_{\Fp}\|_{L^2}^{\frac{1}{2}}\|\Delta_j\nabla\delta_{\vv}\|_{L^2}\nonumber\\
&+C2^{js}\bigg(1+\|\nabla^2\CP^2\|_{L^2}+\|\nabla^2\Fp^{(2)}\|_{L^2}+\|\nabla\CP^2\|_{L^4}\|(\nabla\Fp^{(1)},\nabla\Fp^{(2)})\|_{L^4}\nonumber\\
&+\|(\nabla\Fp^{(1)},\nabla\Fp^{(2)})\|_{H^1}+\|(\nabla\Fp^{(1)},\nabla\Fp^{(2)})\|_{L^4}^2\bigg)\CU^{\frac{1}{2}}(t)\|\Delta_j\nabla\delta_{\vv}\|_{L^2}\nonumber\\
\leq&C2^{\frac{js}{2}}F^{\frac{1}{4}}(t)\CU^{\frac{1}{4}}(t)\sum_{l=j-9}^{j+9}2^{\frac{l}{2}}\|\Delta_l\nabla\delta_{\Fp}\|_{L^2}^{\frac{1}{2}}\|\Delta_j\nabla\delta_{\vv}\|_{L^2}\nonumber\\
&+C2^{js}F^{\frac{1}{2}}(t)\CU^{\frac{1}{2}}(t)\|\Delta_j\nabla\delta_{\vv}\|_{L^2}\nonumber\\
\leq&\ve\sum_{l=j-9}^{j+9}2^{2l}\|\Delta_l\nabla\delta_{\Fp}\|_{L^2}^2+\ve2^{2j}\|\Delta_j\delta_{\vv}\|_{L^2}^2+C2^{2js}F(t)\CU(t),
\end{align*}
where $\ve$ denotes a small positive constant to be determined later, and we have noticed the fact that $\CP^2$ is a polynomial function of $(\Fp^{(1)},\Fp^{(2)})$ with degree no more than 8.

According to the expression of $\sigma^d$ in (\ref{sigma-d}), the stress $\sigma^d(\nabla\delta_{\Fp})$ can be expressed as
\begin{align*}
\sigma^d(\nabla\delta_{\Fp})=&\sigma^d(\nabla\Fp^{(1)},\Fp^{(1)})-\sigma^d(\nabla\Fp^{(2)},\Fp^{(2)})\\
=&\sum^3_{i,j=1}\sum^2_{\alpha=1}\CP\nabla\nn^{(\alpha)}_i\nabla\delta_{\nn_j}+\sum^3_{i,j,k=1}\CP\nabla\nn^{(1)}_i\nabla\nn^{(1)}_j\delta_{\nn_k}.
\end{align*}
Consequently, similar to the treatment of the term $I_2$, utilizing Lemma \ref{useful-inequality}, the term $I_3$ can be estimated as
\begin{align*}
I_3\leq& \Big(\sum^3_{i,j=1}\sum^2_{\alpha=1}\|\Delta_j\big(\CP\nabla\nn^{(\alpha)}_i\nabla\delta_{\nn_j}\big)\|_{L^2}+\sum^3_{i,j,k=1}\|\Delta_j\big(\CP\nabla\nn^{(1)}_i\nabla\nn^{(1)}_j\delta_{\nn_k}\big)\|_{L^2}\Big)\|\Delta_j\nabla\delta_{\vv}\|_{L^2}\\
\leq&C2^{2js}F(t)\CU(t)+\ve\sum_{l=j-9}^{j+9}2^{2l}\|\Delta_l\nabla\delta_{\Fp}\|_{L^2}^2+\ve2^{2j}\|\Delta_j\delta_{\vv}\|_{L^2}^2.
\end{align*}

To handle the term $I_4$, we first estimate the transport terms. Using Lemma \ref{useful-inequality}, we have
\begin{align}\label{transport-term-estimate}
&\big\langle\Delta_j\big(\delta_{\vv}\otimes\vv^{(2)}+\vv^{(1)}\otimes\delta_{\vv}\big),\Delta_j\nabla\delta_{\vv}\big\rangle\nonumber\\
&\quad\leq \Big(C2^{js}\|(\vv^{(1)},\vv^{(2)})\|_{H^1}\|\delta_{\vv}\|_{B^{-s}_{2,\infty}}\nonumber\\
&\qquad+C2^{\frac{(s+1)j}{2}}\|(\vv^{(1)},\vv^{(2)})\|_{L^4}\|\delta_{\vv}\|^{\frac{1}{2}}_{B^{-s}_{2,\infty}}\sum_{|j'-j|\leq4}\|\Delta_{j'}\delta_{\vv}\|^{\frac{1}{2}}_{L^2}\Big)\|\Delta_j\nabla\delta_{\vv}\|_{L^2}\nonumber\\
&\quad\leq C2^{2js}\|(\vv^{(1)},\vv^{(2)})\|_{H^1}^{2}\CV(t)+\ve2^{2j}\|\Delta_j\delta_{\vv}\|_{L^2}^2.
\end{align}
Then, combining (\ref{transport-term-estimate}) we obtain
\begin{align*}
I_4&\leq C2^{2js}\|(\vv^{(1)},\vv^{(2)})\|_{H^1}^{2}\CV(t)+\ve2^{2j}\|\Delta_j\delta_{\vv}\|_{L^2}^2\\
&\quad+\underbrace{\sum_{i=1}^{3}\big\langle\Delta_j(\CP\delta_{\nn_i}\nabla\vv^{(2)}),\Delta_j\nabla\delta_{\vv}\big\rangle}_{I_{41}}+\underbrace{\sum_{i,k=1}^{3}\big\langle\Delta_j(\CP\delta_{\nn_i}\hh^{(2)}_{k}),\Delta_j\nabla\delta_{\vv}\big\rangle}_{I_{42}}.
\end{align*}
Applying Lemma \ref{useful-inequality} and the expression of $\hh_i(i=1,2,3)$ in Lemma \ref{h-decomposition}, $I_{41}$ and $I_{42}$ can be estimated as, respectively,
\begin{align*}
I_{41}&\leq\sum^3_{i=1}\|\Delta_j(\CP\nabla\vv^{(2)}\delta_{\nn_i})\|_{L^2}\|\Delta_j\nabla\delta_{\vv}\|_{L^2}\\
&\leq C2^{js}(1+\|(\nabla\Fp^{(1)},\nabla\Fp^{(2)})\|_{L^2})\|\delta_{\Fp}\|_{B^{1-s}_{2,\infty}}\|\nabla\vv^{(2)}\|_{L^2}2^{j}\|\Delta_j\delta_{\vv}\|_{L^2}\\
&\leq C2^{2js}\|\nabla\vv^{(2)}\|_{L^2}^2\CU(t)+\ve2^{2j}\|\Delta_j\delta_{\vv}\|_{L^2}^2,\\
I_{42}&\leq\sum^3_{i,k=1}\|\Delta_j(\CP\delta_{\nn_i}\hh^{(2)}_k)\|_{L^2}\|\Delta_j\nabla\delta_{\vv}\|_{L^2}\\
&\leq C2^{js}(1+\|(\nabla\Fp^{(1)},\nabla\Fp^{(2)})\|_{L^2})\|\delta_{\Fp}\|_{B^{1-s}_{2,\infty}}\sum^3_{k=1}\|\hh^{(2)}_k\|_{L^2}2^{j}\|\Delta_j\delta_{\vv}\|_{L^2}\\
&\leq C2^{2js}\big(\|\nabla\Fp^{(2)}\|^4_{L^4}+\|\nabla^2\Fp^{(2)}\|^2_{L^2}\big)\CU(t)+\ve 2^{2j}\|\Delta_j\delta_{\vv}\|^2_{L^2}.
\end{align*}
Accordingly, combining the above estimates, we get
\begin{align*}
I_4\leq C2^{2js}F(t)(\CV(t)+\CU(t))+\ve 2^{2j}\|\Delta_j\delta_{\vv}\|^2_{L^2}.
\end{align*}

Thus, substituting the above estimates of $I_k(k=1,\cdots,4)$ into (\ref{difference-vv-L2}) and removing the dissipative terms, we arrive at
\begin{align}\label{differen-delta-vv-L2final}
&\frac{1}{2}\frac{\ud}{\ud t}\|\Delta_j\delta_{\vv}\|_{L^2}^2+ca_j2^{2j}\|\Delta_j\delta_{\vv}\|_{L^2}^2\nonumber\\
&\quad\leq -\Big\langle\frac{\eta_3}{\chi_3}\sss^{(1)}_3\mathcal{H}^{\Delta_j}_3+\frac{\eta_2}{\chi_2}\sss^{(1)}_4\mathcal{H}^{\Delta_j}_2+\frac{\eta_1}{\chi_1}\sss^{(1)}_5\mathcal{H}^{\Delta_j}_1,\Delta_j\delta_{\A}\Big\rangle\nonumber\\
&\qquad+\Big\langle\frac{1}{2}\big(\aaa^{(1)}_1\mathcal{H}^{\Delta_j}_3+\aaa^{(1)}_2\mathcal{H}^{\Delta_j}_2+\aaa^{(1)}_3\mathcal{H}^{\Delta_j}_1\big),\Delta_j\delta_{\BOm}\Big\rangle\nonumber\\
&\qquad +C2^{2js}F(t)\Phi(t)+\ve\sum_{l=j-9}^{j+9}2^{4l}\|\Delta_l\delta_{\Fp}\|_{L^2}^2+\ve\sum_{l=j-4}^{j+4}2^{2l}\|\Delta_l\delta_{\vv}\|_{L^2}^2.
\end{align}

\subsection{Estimate for the orthonormal frame $\delta_{\Fp}$}

This subsection is dedicated to the estimates of the difference for the orthonormal frame, i.e., $\delta_{\Fp}=\Fp^{(1)}-\Fp^{(2)}$. To begin with, we may verify a claim for $L^2$-estimate of $\Delta_{-1}\delta_{\Fp}$.

{\bf Claim 1}: {\it There exists a constant $C>0$, such that}
\begin{align}\label{claim-Delta-1-L2}
\frac{1}{2}\frac{\ud}{\ud t}\|\Delta_{-1}\delta_{\Fp}\|_{L^2}^2\leq C F(t)\big(\CV(t)+\CU(t)\big).
\end{align}

Indeed, applying the operator $\Delta_{-1}$ to both sides of the equation (\ref{n1-uniquence-equ}) and taking $L^2$-inner product with $\Delta_{-1}\delta_{\Fp}$, we can deduce that
\begin{align}\label{Delta--1Fp-L2}
&\frac{1}{2}\frac{\ud}{\ud t}\|\Delta_{-1}\delta_{\Fp}\|_{L^2}^2=\sum^3_{i=1}\frac{1}{2}\frac{\ud}{\ud t}\|\Delta_{-1}\delta_{\nn_i}\|_{L^2}^2\nonumber\\
&\quad=\sum^3_{i=1}\big\langle\Delta_{-1}(\CP\nabla\delta_{\vv}),\Delta_{-1}\delta_{\nn_i}\big\rangle+\sum^3_{i,j=1}\big\langle\Delta_{-1}(\CP\delta_{\hh_j}),\Delta_{-1}\delta_{\nn_i}\big\rangle\nonumber\\
&\qquad+\sum^3_{i=1}\big\langle\Delta_{-1}\delta_{\FF_i},\Delta_{-1}\delta_{\nn_i}\big\rangle\nonumber\\
&\quad\eqdefa I_5+I_6+I_7,
\end{align}
where we have utilized the short notation similar to (\ref{delta-hh-i}). For the term $I_5$, we can infer from Lemma \ref{commutator} that
\begin{align*}
I_5&=\sum^3_{i=1}\big\langle[\Delta_{-1},\CP]\nabla\delta_{\vv},\Delta_{-1}\delta_{\nn_i}\big\rangle+\sum^3_{i=1}\big\langle\CP\nabla\Delta_{-1}\delta_{\vv},\Delta_{-1}\delta_{\nn_i}\big\rangle\\
&\leq C\big(\|\mathcal{P}\|_{L^{\infty}}+\|\nabla\mathcal{P}\|_{L^4}+\|\nabla^2\mathcal{P}\|_{L^2})\|\delta_{\vv}\|_{B^{-s}_{2,-\infty}}\|\Delta_{-1}\delta_{\Fp}\|_{L^2}\\
&\leq CF(t)\big(\CV(t)+\CU(t)\big).
\end{align*}
Recalling the expression of $\delta_{\hh_i}(i=1,2,3)$ in (\ref{delta-hh-i}), and applying Lemma \ref{useful-inequality}
 and Lemma \ref{commutator}, the term $I_6$ can be handled as
 \begin{align*}
I_6\leq&\sum_{i=1}^3\bigg[\|[\Delta_{-1},\CP^2]\nabla^2\delta_{\nn_i}\|_{L^2}+\|\CP^2 \Delta_{-1}\nabla^2\delta_{\nn_i}\|_{L^2}\\
&+\sum^3_{k=1}\bigg(\|\Delta_{-1}\big(\CP^2\nabla^2\nn_i^{(2)}\delta_{\nn_k}\big)\|_{L^2}+\|\Delta_{-1}\big(\CP^2\nabla\nn^{(1)}_k\nabla\delta_{\nn_i}\big)\|_{L^2}\\
&+\|\Delta_{-1}\big(\CP^2\nabla\nn^{(2)}_i\nabla\delta_{\nn_k}\big)\|_{L^2}+\sum^2_{\alpha=1}\|\Delta_{-1}\big(\CP^2\nabla\nn_i^{(\alpha)}\nabla\delta_{\nn_k}\big)\|_{L^2}\nonumber\\
&+\sum^2_{\alpha=1}\|\Delta_{-1}\big(\CP^2\nabla\nn^{(2)}_i\nabla\nn^{(\alpha)}_k\delta_{\nn_k}\big)\|_{L^2}+\|\Delta_{-1}\big(\CP^2\nabla\nn^{(1)}_k\nabla\nn^{(1)}_k\delta_{\nn_i}\big)\|_{L^2}\bigg)\bigg]\|\Delta_{-1}\delta_{\nn_i}\|_{L^2}\nonumber\\
\leq&C\CU^{\frac{1}{2}}(t)F^{\frac{1}{2}}(t)\|\Delta_{-1}\delta_{\Fp}\|_{L^2}\\
&+C\Big(\|\nabla\CP^2\|_{L^4}+\|(\nabla\Fp^{(1)},\nabla\Fp^{(1)})\|_{L^4}\Big)\|\delta_{\Fp}\|_{B^{1-s}_{2,\infty}}^{\frac{1}{2}}\sum_{|j'+1|\leq9}2^{\frac{j'}{2}}\|\Delta_{j'}\nabla\delta_{\Fp}\|_{L^2}^{\frac{1}{2}}\|\Delta_{-1}\delta_{\Fp}\|_{L^2}\\
\leq& CF(t)\CU(t).
 \end{align*}
It remains to deal with the term $I_7$. Using the definitions of $\delta_{\FF_i}(i=1,2,3)$, we have
 \begin{align*}
I_7=&-\sum^3_{i=1}\big\langle\Delta_{-1}(\vv^{(1)}\cdot\nabla\delta_{\nn_i}),\Delta_{-1}\delta_{\nn_i}\big\rangle-\sum^3_{i=1}\big\langle\Delta_{-1}(\delta_{\vv}\cdot\nn^{(2)}_i),\Delta_{-1}\delta_{\nn_i}\big\rangle\\
&+\sum_{i=1}^{3}\big\langle\Delta_{-1}(\mathcal{P}\nabla\vv^{(2)}\delta_{\nn_i}),\Delta_{-1}\delta_{\nn_i}\big\rangle+\sum_{i,j=1}^{3}\big\langle\Delta_{-1}(\mathcal{P}\delta_{\nn_i}\hh^{(2)}_j),\Delta_{-1}\delta_{\nn_i}\big\rangle\\
\eqdefa&I_{71}+I_{72}+I_{73}+I_{74}.
 \end{align*}
Armed with Lemma \ref{useful-inequality}, we can derive that
\begin{align*}
&I_{71}\leq C\big(\|\vv^{(1)}\|_{H^1}+\|\vv^{(1)}\|_{L^4}\big)\|\delta_{\Fp}\|_{B^{1-s}_{2,\infty}}\|\Delta_{-1}\delta_{\Fp}\|_{L^2}\leq CF(t)\CU(t),\\
&I_{72}\leq C\big(\|\nabla\Fp^{(2)}\|_{H^1}+\|\nabla\Fp^{(2)}\|_{L^4}\big)\|\delta_{\vv}\|_{B^{-s}_{2,\infty}}\|\Delta_{-1}\delta_{\Fp}\|_{L^2}\leq CF(t)\big(\CV(t)+\CU(t)\big),\\
&I_{73}\leq C\big(\|\mathcal{P}\|_{L^{\infty}}+\|\nabla\mathcal{P}\|_{L^2})\|\nabla\vv^{(2)}\|_{L^2}\|\delta_{\Fp}\|_{B^{1-s}_{2,\infty}}\|\Delta_{-1}\delta_{\Fp}\|_{L^2}\leq CF(t)\CU(t).
\end{align*}
Similar to the estimate of the term $I_6$, from Lemma \ref{useful-inequality} and Lemma \ref{commutator}, we also can infer that
\begin{align*}
I_{74}\leq CF(t)\CU(t).
\end{align*}
Therefore, combining the above estimates for $I_i(i=5,6,7)$ with (\ref{Delta--1Fp-L2}), the above Claim 1 can be completed.

Next, we give the second claim about the estimate of $\delta_{\Fp}$.

{\bf Claim 2}: {\it There exists a constant $C>0$ such that for any $j\geq -1$ and $\ve\geq 0$, it follows that}
\begin{align}\label{wj-L2-ud-dt}
&\frac{\ud}{\ud t}\int_{\mathbb{R}^2}W^j(\xx,t)\ud\xx+\sum_{k=1}^{3}\frac{1}{\chi_k}\|\mathcal{H}^{\Delta_j}_k\|_{L^2}^2\nonumber\\
&\leq-\Big\langle\frac{1}{2}\Delta_j\delta_{\BOm}\cdot\aaa^{(1)}_1,\mathcal{H}^{\Delta_j}_3\Big\rangle-\Big\langle\frac{1}{2}\Delta_j\delta_{\BOm}\cdot\aaa^{(1)}_2,\mathcal{H}^{\Delta_j}_2\Big\rangle-\Big\langle\frac{1}{2}\Delta_j\delta_{\BOm}\cdot\aaa^{(1)}_3,\mathcal{H}^{\Delta_j}_1\Big\rangle\nonumber\\
&\quad+\Big\langle\frac{\eta_3}{\chi_3}\Delta_j\delta_{\A}\cdot\sss^{(1)}_3,\mathcal{H}^{\Delta_j}_3\Big\rangle+\Big\langle\frac{\eta_2}{\chi_2}\Delta_j\delta_{\A}\cdot\sss^{(1)}_4,\mathcal{H}^{\Delta_j}_2\Big\rangle+\Big\langle\frac{\eta_1}{\chi_1}\Delta_j\delta_{\A}\cdot\sss^{(1)}_5,\mathcal{H}^{\Delta_j}_1\Big\rangle\nonumber\\
&\quad+C2^{2js}F(t)\Phi(t)+\ve\sum_{l=j-4}^{j+4}2^{2l}\|\Delta_{l}\delta_{\vv}\|_{L^2}^2+\ve\sum_{l=j-9}^{j+9}2^{4l}\|\Delta_l\delta_{\Fp}\|_{L^2}^2,
\end{align}
{\it where $W^j(\xx,t)$ is defined by \eqref{Wj-xt}, and $\CH^{\Delta_j}_k(k=1,2,3)$ are defined by \eqref{H-Delta-j}, respectively}.

Before proving Claim 2, we calculate some derivatives of $W^j(\xx,t)$ with respect to the frame. For convenience, we denote
\begin{align*}
&\CE[\Fp^{(1)},\Delta_j\delta_{\Fp}](t)\eqdefa\int_{\mathbb{R}^2}W^j(\xx,t)\ud\xx,\\
&\CG\eqdefa-\frac{\delta\CE}{\delta(\Delta_j\delta_{\Fp})},\quad \CG_i=-\frac{\delta\CE}{\delta(\Delta_j\delta_{\nn_i})},\quad \CG=(\CG_1,\CG_2,\CG_3).
\end{align*}
By a direct calculation,  we obtain
\begin{align}
\CG_i&=\gamma_i\Delta\Delta_j\delta_{\nn_i}+k_i\nabla\rm{div}\Delta_j\delta_{\nn_i}-\sum_{\alpha=1}^{3}k_{i\alpha}\nabla\times(\nabla\times\Delta_j\delta_{\nn_i}\cdot\nn^{(1)}_\alpha\otimes\nn^{(1)}_\alpha),\label{CG-i}\\
\frac{\delta \CE}{\delta\nn^{(1)}_i}&=\sum_{\alpha=1}^{3}k_{\alpha i}(\nn^{(1)}_i\cdot\nabla\times\Delta_j\delta_{\nn_{\alpha}})(\nabla\times\Delta_j\delta_{\nn_{\alpha}}).\label{delta-CE-Fp1}
\end{align}
Then, from (\ref{CG-i}) and Lemma \ref{b-inequalities} we get
\begin{align}\label{CG-i-L2}
\|\CG_i\|_{L^2}\leq C\Big(\|\nabla^2\Delta_j\delta_{\nn_i}\|_{L^2}+\sum_{\alpha=1}^{3}\||\nabla\nn^{(1)}_{\alpha}|\nabla\Delta_j\delta_{\nn_i}\|_{L^2}\Big)\leq C2^{2j}\|\Delta_j\delta_{\nn_i}\|_{L^2},
\end{align}
Armed with the expressions of $\delta_{\hh_i}(i=1,2,3)$ in (\ref{delta-hh-i}),  $\Delta_j\delta_{\hh_i}$ can be expressed in short notation,
\begin{align}\label{Delta-j-delta-hh-i}
\Delta_j\delta_{\hh_i}=&\,\CG_i+\sum_{\alpha=1}^3k_{i\alpha}\bigg((\nabla\nn_{\alpha}^{(1)}\cdot\nabla\times\Delta_j\delta_{\nn_i})\times\nn_{\alpha}^{(1)}+(\nn_{\alpha}^{(1)}\cdot\nabla\times\Delta_j\delta_{\nn_i})\nabla\times\nn_{\alpha}^{(1)}\bigg)\nonumber\\
&+\sum^3_{l=1}\Delta_j\big(\CP\nabla^2\nn^{(2)}_i\delta_{\nn_i}\big)+\sum^3_{l=1}\Delta_j\big(\CP\nabla\nn^{(1)}_l\nabla\delta_{\nn_i}\big)+\sum^3_{l=1}\Delta_j\big(\CP\nabla\nn^{(2)}_i\nabla\delta_{\nn_l}\big)\nonumber\\
&+\sum^3_{l=1}\sum^2_{\alpha=1}\Delta_j\big(\CP\nabla\nn^{(\alpha)}_i\nabla\delta_{\nn_l}\big)+\sum^3_{l=1}\sum^2_{\alpha=1}\Delta_j\big(\CP\nabla\nn^{(2)}_i\nabla\nn^{(\alpha)}_l\delta_{\nn_l}\big)\nonumber\\
&+\sum^3_{l=1}\Delta_j\big(\CP\nabla\nn^{(1)}_l\nabla\nn^{(1)}_l\delta_{\nn_i}\big)-[\Delta_j,\CP]\nabla(\nabla\times\delta_{\nn_i})\nonumber\\
\eqdefa&\,\CG_i+\CT_i.
\end{align}
Furthermore, from (\ref{Delta-j-delta-hh-i}), together with Lemma \ref{useful-inequality}, Lemma \ref{commutator} and
the Sobolev's embedding $H^{1}(\BR)\hookrightarrow L^{4}(\BR)$, we deduce that
\begin{align}\label{CT-i-L2}
\|\CT_i\|_{L^2}\leq&C\|\nabla\Fp^{(1)}\|_{L^{4}}\|\nabla\Delta_j\delta_{\nn_i}\|_{L^4}+C2^{js}\|\delta_{\nn_i}\|_{B^{1-s}_{2,\infty}}\Big(\|\nabla^2\nn_{i}^{(2)}\|_{L^2}+\|(\nabla\Fp^{(1)},\nabla\Fp^{(2)})\|_{L^4}^2\Big)\nonumber\\
&+C2^{js}\|\delta_{\nn_i}\|_{B^{1-s}_{2,\infty}}\Big(\|(\nabla\Fp^{(1)},\nabla\Fp^{(2)})\|_{H^1}+\|\nabla\CP\|_{L^4}\|(\nabla\Fp^{(1)},\nabla\Fp^{(2)})\|_{L^4}+\|\nabla^2\CP\|_{L^2}\Big)\nonumber\\
&+C2^{\frac{js}{2}}\|(\nabla\Fp^{(1)},\nabla\Fp^{(2)})\|_{L^4}\|\delta_{\nn_i}\|_{B^{1-s}_{2,\infty}}^{\frac{1}{2}}\sum_{j'=j-9}^{j+9}2^{\frac{j'}{2}}\|\Delta_{j'}\nabla\delta_{\nn_i}\|_{L^2}^{\frac{1}{2}}\nonumber\\
\leq&C2^{js}F^{\frac{1}{2}}(t)\CU^{\frac{1}{2}}(t)+C2^{\frac{js}{2}}F^{\frac{1}{4}}(t)\CU^{\frac{1}{4}}(t)\sum_{j'=j-9}^{j+9}2^{\frac{j'}{2}}\|\Delta_{j'}\nabla\delta_{\nn_i}\|_{L^2}^{\frac{1}{2}}.
\end{align}

We now return to the proof of Claim 2. We denote
\begin{align*}
\delta_{\hh}=(\delta_{\hh_1},\delta_{\hh_2},\delta_{\hh_3}),\quad \Delta_j\delta_{\hh}=(\Delta_j\delta_{\hh_1},\Delta_j\delta_{\hh_2},\Delta_j\delta_{\hh_3}),\quad \CT=(\CT_1,\CT_2,\CT_3).
\end{align*}
Then, by the above definition of $\CE$, we obtain
\begin{align}\label{CE-nenergy-dt}
&\frac{\ud}{\ud t}\int_{\mathbb{R}^2}W^j(\xx,t)\ud\xx=\int_{\mathbb{R}^2}\Big(\frac{\delta\CE}{\delta(\Delta_j\delta_{\Fp})}\cdot\frac{\partial\Delta_j\delta_{\Fp}}{\partial t}+\frac{\delta\CE}{\delta\Fp^{(1)}}\cdot\frac{\partial\Fp^{(1)}}{\partial t}\Big)\ud\xx\nonumber\\
&\quad=\int_{\mathbb{R}^2}\sum_{i=1}^{3}\frac{\partial\Delta_j\delta_{\nn_i}}{\partial t}\cdot(\CT_i-\Delta_j\delta_{\hh_i})\ud\xx+\int_{\mathbb{R}^2}\sum_{i=1}^{3}\frac{\delta \CE}{\delta\nn^{(1)}_i}\cdot\frac{\partial\nn^{(1)}_i}{\partial t}\ud\xx\nonumber\\
&\quad\eqdefa \CA_1+\CA_2.
\end{align}
First, using (\ref{delta-CE-Fp1}) and Lemma \ref{b-inequalities}, the term $\CA_2$ can be estimated as
\begin{align*}
\CA_2&\leq C\sum_{i,k=1}^{3}2^{3j}\|\Delta_j\delta_{\nn_i}\|^2_{L^2}\|\partial_t\nn_{k}^{(1)}\|_{L^2}\\
&\leq\ve\sum_{i=1}^{3}2^{4j}\|\Delta_j\delta_{\nn_i}\|_{L^2}^2+C2^{2j}\sum_{i,k=1}^{3}\|\partial_t\nn_{k}^{(1)}\|_{L^2}^2\|\Delta_j\delta_{\nn_i}\|_{L^2}^2\\
&\leq\ve2^{4j}\|\Delta_j\delta_{\Fp}\|_{L^2}^2+C2^{2js}F(t)\CU(t).
\end{align*}
It remains to deal with the term $\CA_1$. Taking advantage of the equations (\ref{n1-uniquence-equ})-(\ref{n3-uniquence-equ}),
we deduce that
\begin{align}\label{CA-1}
\CA_1&=-\Big\langle\frac{1}{2}\Delta_j\delta_{\BOm}\cdot\aaa^{(1)}_1,\mathcal{H}^{\Delta_j}_3\Big\rangle-\Big\langle\frac{1}{2}\Delta_j\delta_{\BOm}\cdot\aaa^{(1)}_2,\mathcal{H}^{\Delta_j}_2\Big\rangle-\Big\langle\frac{1}{2}\Delta_j\delta_{\BOm}\cdot\aaa^{(1)}_3,\mathcal{H}^{\Delta_j}_1\Big\rangle\nonumber\\
&\quad+\Big\langle\frac{\eta_3}{\chi_3}\Delta_j\delta_{\A}\cdot\sss^{(1)}_3,\mathcal{H}^{\Delta_j}_3\Big\rangle+\Big\langle\frac{\eta_2}{\chi_2}\Delta_j\delta_{\A}\cdot\sss^{(1)}_4,\mathcal{H}^{\Delta_j}_2\Big\rangle+\Big\langle\frac{\eta_1}{\chi_1}\Delta_j\delta_{\A}\cdot\sss^{(1)}_5,\mathcal{H}^{\Delta_j}_1\Big\rangle\nonumber\\
&\quad-\sum^3_{k=1}\frac{1}{\chi_k}\|\mathcal{H}^{\Delta_j}_k\|_{L^2}^2+\underbrace{\sum^3_{i=1}\big\langle\CT_i,\partial_t\Delta_j\delta_{\nn_i}\big\rangle}_{\CA_{11}}\underbrace{-\sum^3_{i=1}\big\langle\Delta_j\delta_{\hh_i},\Delta_j\delta_{\FF_i}\big\rangle}_{\CA_{12}}\nonumber\\
&\quad\underbrace{-\sum^3_{i=1}\big\langle[\Delta_j,\CP]\nabla\delta_{\vv}+[\Delta_j,\CP]\delta_{\hh_i},\Delta_j\delta_{\hh_i}\big\rangle}_{\CA_{13}}.
\end{align}
Using Lemma \ref{useful-inequality} and Lemma \ref{commutator} the term $\CA_{11}$ can be estimated as follows:
\begin{align*}
\CA_{11}=&\,\Big\langle\CT_1\cdot\nn^{(1)}_2-\CT_2\cdot\nn^{(1)}_1,\frac{1}{2}\Delta_j\delta_{\BOm}\cdot\aaa^{(1)}_1+\frac{\eta_3}{\chi_3}\Delta_j\delta_{\A}\cdot\sss^{(1)}_3\Big\rangle\nonumber\\
&+\Big\langle\CT_3\cdot\nn^{(1)}_1-\CT_1\cdot\nn^{(1)}_3,\frac{1}{2}\Delta_j\delta_{\BOm}\cdot\aaa^{(1)}_2+\frac{\eta_2}{\chi_2}\Delta_j\delta_{\A}\cdot\sss^{(1)}_4\Big\rangle\nonumber\\
&+\Big\langle\CT_2\cdot\nn^{(1)}_3-\CT_3\cdot\nn^{(1)}_2,\frac{1}{2}\Delta_j\delta_{\BOm}\cdot\aaa^{(1)}_3+\frac{\eta_1}{\chi_1}\Delta_j\delta_{\A}\cdot\sss^{(1)}_5\Big\rangle\nonumber\\
&-\sum_{k=1}^3\Big\langle\CT\cdot V^{(1)}_k,\frac{1}{\chi_k}\CH^{\Delta_j}_k\Big\rangle+\sum_{i=1}^3\big\langle\CT_i,\Delta_j\delta_{\FF_i}\big\rangle\nonumber\\
&+\sum_{i=1}^3\big\langle\CT_i,[\Delta_j,\CP]\nabla\delta_{\vv}+[\Delta_j,\CP]\delta_{\hh_i}\big\rangle\nonumber\\
\leq&\,C\sum_{i=1}^3\bigg(\|\Delta_j\nabla\delta_{\vv}\|_{L^2}+\|\Delta_j(\vv^{(1)}\cdot\nabla\delta_{\nn_i})\|_{L^2}+\|\Delta_j(\delta_{\vv}\cdot\nn_i^{(2)})\|_{L^2}\nonumber\\
&+\sum_{k=1}^3\Big(\|\Delta_j\delta_{\hh_k}\|_{L^2}+\|\Delta_j(\CP\nabla\vv^{(2)}\delta_{\nn_k})\|_{L^2}+\sum_{l=1}^3\|\Delta_j(\CP\hh_k^{(2)}\delta_{\nn_l})\|_{L^2}\Big)\nonumber\\
&+\|[\Delta_j,\CP]\nabla\delta_{\vv}\|_{L^2}+\|[\Delta_j,\CP]\delta_{\hh_i}\|_{L^2}\bigg)\|\CT_i\|_{L^2}\nonumber\\
\leq&\,C\sum_{i=1}^3\bigg(\|\Delta_j\nabla\delta_{\vv}\|_{L^2}+2^{2j}\|\Delta_j\delta_{\nn_i}\|_{L^2}+2^{js}\big(\CU^{\frac{1}{2}}(t)+\CV^{\frac{1}{2}}(t)\big)F^{\frac{1}{2}}(t)\nonumber\\
&+2^{\frac{js}{2}}F^{\frac{1}{4}}(t)\CU^{\frac{1}{4}}(t)\sum_{|j-j'|\leq9}2^{\frac{j'}{2}}\|\Delta_{j'}\nabla\delta_{\nn_i}\|_{L^2}^{\frac{1}{2}}\nonumber\\
&+2^{\frac{js}{2}}F^{\frac{1}{4}}(t)\CV^{\frac{1}{4}}(t)\sum_{|j-j'|\leq4}2^{\frac{j'}{2}}\|\Delta_{j'}\delta_{\vv}\|_{L^2}^{\frac{1}{2}}\bigg)\|\CT_i\|_{L^2}\nonumber\\
\leq&\,\ve\sum_{|j-j'|\leq9}2^{4j'}\|\Delta_{j'}\delta_{\Fp}\|_{L^2}^2+\ve\sum_{|j-j'|\leq4}2^{2j'}\|\Delta_{j'}\delta_{\vv}\|_{L^2}^2\nonumber\\
&+C2^{2js}\big(\CU(t)+\CV(t)\big)F(t),
\end{align*}
where we have used the expression $V^{(1)}_k(k=1,2,3)$ that are given by
\begin{align*}
V^{(1)}_1=(0,\nn^{(1)}_3,-\nn^{(1)}_2),\quad V^{(1)}_2=(-\nn^{(1)}_3,0,\nn^{(1)}_1),\quad V^{(1)}_3=(\nn^{(1)}_2,-\nn^{(1)}_1,0).
\end{align*}
Similarly, the terms $\CA_{12}$ and $\CA_{13}$ can be handled as, respectively,
\begin{align*}
|\CA_{12}|\leq&\sum_{i=1}^3\|\Delta_j\delta_{\hh_i}\|_{L^2}\|\Delta_j\delta_{\FF_i}\|_{L^2}\nonumber\\
\leq&\sum_{i=1}^3\|\Delta_j\delta_{\hh_i}\|_{L^2}\bigg(\|\Delta_j(\CP\nabla\vv^{(2)}\delta_{\nn_k})\|_{L^2}+\sum_{l=1}^3\|\Delta_j(\CP\hh_k^{(2)}\delta_{\nn_l})\|_{L^2}\nonumber\\
&+\|\Delta_j(\vv^{(1)}\cdot\nabla\delta_{\nn_i})\|_{L^2}+\|\Delta_j(\delta_{\vv}\cdot\nn_i^{(2)})\|_{L^2}\bigg)\nonumber\\
\leq&\,\ve\sum_{|j-j'|\leq9}2^{4j'}\|\Delta_{j'}\delta_{\Fp}\|_{L^2}^2+\ve\sum_{|j-j'|\leq4}2^{2j'}\|\Delta_{j'}\delta_{\vv}\|_{L^2}^2\nonumber\\
&+C2^{2js}\big(\CU(t)+\CV(t)\big)F(t),\\
|\CA_{13}|\leq&\sum_{i=1}^3\big(\|[\Delta_j,\CP]\nabla\delta_{\vv}\|_{L^2}+\|[\Delta_j,\CP]\delta_{\hh_i}\|_{L^2}\big)\|\Delta_j\delta_{\hh_i}\|_{L^2}\nonumber\\
\leq&\,\ve\sum_{|j-j'|\leq9}2^{4j'}\|\Delta_{j'}\delta_{\Fp}\|_{L^2}^2+\ve\sum_{|j-j'|\leq4}2^{2j'}\|\Delta_{j'}\delta_{\vv}\|_{L^2}^2\nonumber\\
&+C2^{2js}\big(\CU(t)+\CV(t)\big)F(t).
\end{align*}
Combining the above estimates of $\CA_{1k}(k=1,2,3)$ with (\ref{CA-1})  we obtain
\begin{align*}
\CA_1&\leq-\Big\langle\frac{1}{2}\Delta_j\delta_{\BOm}\cdot\aaa^{(1)}_1,\mathcal{H}^{\Delta_j}_3\Big\rangle-\Big\langle\frac{1}{2}\Delta_j\delta_{\BOm}\cdot\aaa^{(1)}_2,\mathcal{H}^{\Delta_j}_2\Big\rangle-\Big\langle\frac{1}{2}\Delta_j\delta_{\BOm}\cdot\aaa^{(1)}_3,\mathcal{H}^{\Delta_j}_1\Big\rangle\nonumber\\
&\quad+\Big\langle\frac{\eta_3}{\chi_3}\Delta_j\delta_{\A}\cdot\sss^{(1)}_3,\mathcal{H}^{\Delta_j}_3\Big\rangle+\Big\langle\frac{\eta_2}{\chi_2}\Delta_j\delta_{\A}\cdot\sss^{(1)}_4,\mathcal{H}^{\Delta_j}_2\Big\rangle+\Big\langle\frac{\eta_1}{\chi_1}\Delta_j\delta_{\A}\cdot\sss^{(1)}_5,\mathcal{H}^{\Delta_j}_1\Big\rangle\nonumber\\
&\quad-\sum^3_{k=1}\frac{1}{\chi_k}\|\mathcal{H}^{\Delta_j}_k\|_{L^2}^2+C2^{2js}\big(\CU(t)+\CV(t)\big)F(t)\nonumber\\
&\quad+\ve\sum_{|j-j'|\leq9}2^{4j'}\|\Delta_{j'}\delta_{\Fp}\|_{L^2}^2+\ve\sum_{|j-j'|\leq4}2^{2j'}\|\Delta_{j'}\delta_{\vv}\|_{L^2}^2.
\end{align*}
Hence, plugging the estimates of $\CA_1$ and $\CA_2$ into (\ref{CE-nenergy-dt}) yields (\ref{wj-L2-ud-dt}). This completes the proof of Claim 2.

In the end, to control the higher order derivative term $\|\Delta\Delta_j\delta_{\Fp}\|_{L^2}$ that enable us to close the energy estimate, we need a higher order dissipated estimate for $\sum^3_{k=1}\frac{1}{\chi_k}\|\mathcal{H}^{\Delta_j}_k\|_{L^2}^2$. To be specific, we have the following key lemma.

\begin{lemma}\label{key-lemma}
There exists a constant $C>0$ such that for any $\ve>0$ sufficiently small, it follows that
\begin{align*}
-\sum^3_{k=1}\frac{1}{\chi_k}\|\mathcal{H}^{\Delta_j}_k\|_{L^2}^2 \leq-\frac{2\gamma^2}{\chi}\|\Delta\Delta_j\delta_{\Fp}\|_{L^2}^2+CF(t)\CU(t)+\ve\sum_{l=j-9}^{j+9}2^{4l}\|\Delta_l\delta_{\Fp}\|_{L^2}^2,
\end{align*}
where $\chi=\max\{\chi_1,\chi_2,\chi_3\}$ and $\gamma=\min\{\gamma_1,\gamma_2,\gamma_3\}$, and $\CH^{\Delta_j}_k(k=1,2,3)$ are defined by \eqref{H-Delta-j}.
\end{lemma}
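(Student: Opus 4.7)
The plan is to extract the Laplacian hidden inside $\CH_k^{\Delta_j}$ via the decomposition $\Delta_j\delta_{\hh_i}=\CG_i+\CT_i$ from (\ref{Delta-j-delta-hh-i}), and then deploy the orthogonal decomposition (\ref{Orth-decomp-SO3}) at the frame $\Fp^{(1)}$ to show that the tangential projections of $\CG$ control the full $L^2$-norm of $\Delta\Delta_j\delta_{\Fp}$ up to lower-order terms. By the definition of $V_k^{(1)}$ in Subsection~\ref{Preliminaries}, each $\CH_k^{\Delta_j}$ equals (up to sign) the componentwise projection $(\CG+\CT)\cdot V_k^{(1)}$, so Young's inequality with parameter $\theta=2/3$ yields $\|\CH_k^{\Delta_j}\|_{L^2}^2\ge\tfrac13\|\CG\cdot V_k^{(1)}\|_{L^2}^2-\tfrac32\|\CT\cdot V_k^{(1)}\|_{L^2}^2$. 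Squaring the bound (\ref{CT-i-L2}) and applying Young's inequality once more, the $\CT$ contribution is absorbed into $CF(t)\CU(t)+\ve\sum_{l=j-9}^{j+9}2^{4l}\|\Delta_l\delta_{\Fp}\|_{L^2}^2$, producing the correct error class. The factor $\tfrac13=1-\theta$ is precisely where the $1/3$ in $\tfrac{2\gamma^2}{3\chi}$ originates.

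Next I split $\CG$ tangentially and normally via (\ref{Orth-decomp-SO3}) at $\Fp^{(1)}$, which gives the identity $|\CG|^2=\tfrac12\sum_{k=1}^3(\CG\cdot V_k^{(1)})^2+\sum_{k=1}^6|W_k^{(1)}|^{-2}(\CG\cdot W_k^{(1)})^2$. The leading part of $\CG_i$ is $\gamma_i\Delta\Delta_j\delta_{\nn_i}$, and the remaining $k_i\nabla\operatorname{div}$ and $k_{i\alpha}$ curl--curl pieces in (\ref{CG-i}), although also second order, carry non-negative coefficients and contribute non-negatively to the quadratic form modulo commutators with $\nabla\nn_\alpha^{(1)}$; a symbolic ellipticity argument therefore gives $\|\CG\|_{L^2}^2\ge\gamma^2\|\Delta\Delta_j\delta_{\Fp}\|_{L^2}^2-\text{l.o.t.}$, with the lower-order remainder again bounded by $CF(t)\CU(t)+\ve\sum_l 2^{4l}\|\Delta_l\delta_{\Fp}\|^2$ using Lemmas~\ref{useful-inequality}--\ref{commutator} and the Sobolev embedding $H^1(\BR)\hookrightarrow L^4(\BR)$. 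For the normal components $\CG\cdot W_k^{(1)}$, one differentiates $|\nn_\alpha^{(\beta)}|^2=1$ and $\nn_\alpha^{(\beta)}\cdot\nn_{\alpha'}^{(\beta)}=0$ for both $\beta=1,2$ to extract identities such as $\nn_\alpha^{(1)}\cdot\Delta\delta_{\nn_\alpha}=-\delta_{\nn_\alpha}\cdot\Delta\nn_\alpha^{(2)}+|\nabla\nn_\alpha^{(2)}|^2-|\nabla\nn_\alpha^{(1)}|^2$ and the symmetric analogue for mixed pairs; localizing by $\Delta_j$ and handling the resulting paraproduct commutators via Lemma~\ref{commutator} gives $\|\CG^N\|_{L^2}^2\le CF(t)\CU(t)+\ve\sum_l 2^{4l}\|\Delta_l\delta_{\Fp}\|^2$. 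Combining these bounds and using $\chi_k\le\chi$ produces $\sum_k\tfrac1{\chi_k}\|\CG\cdot V_k^{(1)}\|_{L^2}^2\ge\tfrac{2}{\chi}(\|\CG\|^2-\|\CG^N\|^2)\ge\tfrac{2\gamma^2}{\chi}\|\Delta\Delta_j\delta_{\Fp}\|_{L^2}^2-\text{l.o.t.}$, which after insertion into the Young splitting from the first paragraph yields the target coefficient $\tfrac{2\gamma^2}{3\chi}$.

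The main obstacle lies in the normal-component step: unlike the uniaxial (Ericksen--Leslie) case where the single constraint $|\nn|=1$ and its consequence $\nn\cdot\Delta\nn=-|\nabla\nn|^2$ suffice, here the normal space $(T_{\Fp^{(1)}}SO(3))^\perp$ is six-dimensional, and one must carefully expose the lower-order structure hidden in each projection $\CG_i\cdot W_k^{(1)}$, including those coming from the anisotropic curl--curl piece $\nabla\times(\nabla\times\Delta_j\delta_{\nn_i}\cdot\nn_\alpha^{(1)}\otimes\nn_\alpha^{(1)})$. This requires a systematic use of the identities in (\ref{import-properties}) applied to $\Fp^{(1)}$ together with the analogous relations for $\Fp^{(2)}$ (so that the difference $\delta_{\Fp}$ appears bilinearly with $\nabla^2\Fp^{(2)}$ or with products of first-order $\nabla\Fp$), after which all residual terms fall into the admissible error class $CF(t)\CU(t)+\ve\sum_l 2^{4l}\|\Delta_l\delta_{\Fp}\|_{L^2}^2$.
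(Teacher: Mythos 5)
Your first step (Young's inequality to pass from $\|\CH_k^{\Delta_j}\|^2$ to $\|\CG\cdot V_k^{(1)}\|^2$ minus a $\CT$-error) and your lower bound $\|\CG\|_{L^2}^2\ge\gamma^2\|\Delta\Delta_j\delta_{\Fp}\|_{L^2}^2-\text{l.o.t.}$ are both sound. The gap is in the normal-component step, which you yourself flag as the crux. You need $\|\CG^N\|_{L^2}^2\le CF(t)\CU(t)+\ve\sum_l 2^{4l}\|\Delta_l\delta_{\Fp}\|_{L^2}^2$, i.e.\ that the projections $\CG\cdot W_k^{(1)}$ are lower order. The identities you invoke (of the type $\nn_\alpha^{(1)}\cdot\Delta\delta_{\nn_\alpha}=-\delta_{\nn_\alpha}\cdot\Delta\nn_\alpha^{(2)}+|\nabla\nn_\alpha^{(2)}|^2-|\nabla\nn_\alpha^{(1)}|^2$, together with the mixed analogues) arise from $W_k\cdot\CD\Fp=0$ applied to the vector Laplacian, and they do make $\Delta\Delta_j\delta_{\Fp}\cdot W_k^{(1)}$ lower order. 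But $\CG_i$ from \eqref{CG-i} is not a pure Laplacian: it also contains $k_i\nabla\mathrm{div}\,\Delta_j\delta_{\nn_i}$ and the anisotropic curl--curl piece. For those, the $SO(3)$ constraint provides no cancellation — e.g.\ $\nn_1^{(1)}\cdot\nabla\mathrm{div}\,\Delta_j\delta_{\nn_1}=n^{(1)}_{1k}\partial_k\partial_l\Delta_j\delta_{n_{1l}}$ contracts the frame index against a spatial derivative index, so it is genuinely second order in $\delta_{\Fp}$ and cannot be pushed into $CF(t)\CU(t)+\ve\sum_l 2^{4l}\|\Delta_l\delta_{\Fp}\|_{L^2}^2$. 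Asserting otherwise is circular, since controlling $\|\nabla^2\Delta_j\delta_{\Fp}\|_{L^2}^2$ is exactly what the lemma is for.

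The paper avoids this trap by never decomposing $\CG$ into tangential and normal parts. Instead it writes the decomposition \eqref{Orth-decomp-SO3}--\eqref{Delta-decomposition} for the bilinear form $A\cdot\Delta\Delta_j\delta_{\Fp}$ with arbitrary $A$, exploiting that only the \emph{right-hand factor} $\Delta\Delta_j\Fp^{(\alpha)}\cdot W_k^{(\alpha)}=-\nabla\Fp^{(\alpha)}\cdot\nabla W_k^{(\alpha)}$ needs to be lower order (this is precisely the constraint identity $W_\alpha\cdot\CD\Fp=0$ in \eqref{import-properties}). Then two choices $A=\Delta\Delta_j\delta_{\Fp}$ and $A=\CG-\gamma\Delta\Delta_j\delta_{\Fp}$ are plugged in, combined with the binomial inequality $\|\CG\cdot V_k^{(1)}\|^2\ge 2\gamma\langle(\CG-\gamma\Delta\Delta_j\delta_{\Fp})\cdot V_k^{(1)},\Delta\Delta_j\delta_{\Fp}\cdot V_k^{(1)}\rangle+\gamma^2\|\Delta\Delta_j\delta_{\Fp}\cdot V_k^{(1)}\|^2$ and an integration-by-parts lower bound for $\int(\CG-\gamma\Delta\Delta_j\delta_{\Fp})\cdot\Delta\Delta_j\delta_{\Fp}$. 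If you want to repair your argument, you must replace your direct bound on $\|\CG^N\|_{L^2}^2$ by this bilinear arrangement: it is not a cosmetic change, since the normal components of $\CG$ itself are not small.
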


\begin{proof}
First of all, we recall the decomposition (\ref{Orth-decomp-SO3}) formed by the tangential spaces $T_{\Fp^{(\alpha)}}SO(3)(\alpha=1,2)$ and its orthogonal complements.
In (\ref{Orth-decomp-SO3}), taking $B=\Delta\Delta_j\Fp^{(\alpha)}$, we obtain
\begin{align}\label{inner-product-decomposition}
A\cdot\Delta\Delta_j\Fp^{(\alpha)}=&\sum_{k=1}^3\frac{1}{|V^{(\alpha)}_k|^2}(A\cdot V^{(\alpha)}_k)(\Delta\Delta_j\Fp^{(\alpha)}\cdot V^{(\alpha)}_k)\nonumber\\
&+\sum_{k=1}^6\frac{1}{|W^{(\alpha)}_k|^2}(A\cdot W^{(\alpha)}_k)(\Delta\Delta_j\Fp^{(\alpha)}\cdot W^{(\alpha)}_k),\quad \alpha=1,2,
\end{align}
where $V^{(\alpha)}_k(k=1,2,3)$ and $W^{(\alpha)}_k(k=1,\cdots,6)$ are the orthogonal basis of the tangential space $T_{\Fp^{(\alpha)}}SO(3)$ and its associated orthogonal complement space, respectively, that is,
\begin{align*}
V^{(\alpha)}_1=&(0,\nn^{(\alpha)}_3,-\nn^{(\alpha)}_2),\quad V^{(\alpha)}_2=(-\nn^{(\alpha)}_3,0,\nn^{(\alpha)}_1),\quad V^{(\alpha)}_3=(\nn^{(\alpha)}_2,-\nn^{(\alpha)}_1,0),\\
W^{(\alpha)}_1=&(0,\nn^{(\alpha)}_3,\nn^{(\alpha)}_2),\quad W^{(\alpha)}_2=(\nn^{(\alpha)}_3,0,\nn^{(\alpha)}_1),\quad W^{(\alpha)}_3=(\nn^{(\alpha)}_2,\nn^{(\alpha)}_1,0),\\
W^{(\alpha)}_4=&(\nn^{(\alpha)}_1,0,0),\quad W^{(\alpha)}_5=(0,\nn^{(\alpha)}_2,0),\quad W^{(\alpha)}_6=(0,0,\nn^{(\alpha)}_3).
\end{align*}
Armed with the definitions of $W^{(\alpha)}_k(k=1,\cdots,6)$, it follows that
\begin{align*}
\Delta\Delta_j\Fp^{(\alpha)}\cdot W^{(\alpha)}_k=&\Delta_j(\Delta\Fp^{(\alpha)}\cdot W^{(\alpha)}_k)-[\Delta_j,W^{(\alpha)}_k\cdot]\Delta\Fp^{(\alpha)}\\
=&\Delta_j\big(\nabla\cdot(\nabla\Fp^{(\alpha)}\cdot W^{(\alpha)}_k)-\nabla\Fp^{(\alpha)}\cdot \nabla W^{(\alpha)}_k\big)-[\Delta_j,W^{(\alpha)}_k\cdot]\Delta\Fp^{(\alpha)}\\
=&-\Delta_j(\nabla\Fp^{(\alpha)}\cdot \nabla W^{(\alpha)}_k)-[\Delta_j,W^{(\alpha)}_k\cdot]\Delta\Fp^{(\alpha)}.
\end{align*}

Noticing $|V^{(1)}_k|=|V^{(2)}_k|$ and $|W^{(1)}_k|=|W^{(2)}_k|$,  and using (\ref{inner-product-decomposition}) we derive
\begin{align}\label{Delta-decomposition}
A\cdot\Delta\Delta_j\delta_{\Fp}=&\sum^3_{k=1}\frac{1}{|V^{(1)}_k|^2}(A\cdot V^{(1)}_k)(\Delta\Delta_j\delta_{\Fp}\cdot V^{(1)}_k)\nonumber\\
&+\sum^3_{k=1}\frac{1}{|V^{(1)}_k|^2}\Big((A\cdot \delta_{V_k})(\Delta\Delta_j\Fp^{(2)}\cdot V^{(1)}_k)+(A\cdot V^{(2)}_k)(\Delta\Delta_j\Fp^{(2)}\cdot\delta_{V_k})\Big)\nonumber\\
&-\sum^6_{k=1}\frac{1}{|W^{(1)}_k|^2}\bigg\{(A\cdot W^{(1)}_k)\Big(\Delta_j(\nabla\delta_{\Fp}\cdot\nabla W^{(1)}_k)+[\Delta_j,W^{(1)}_k\cdot]\Delta\delta_{\Fp}\Big)\nonumber\\
&+(A\cdot W^{(1)}_k)\Big(\Delta_j(\nabla\Fp^{(2)}\cdot\nabla\delta_{W_k})+[\Delta_j,\delta_{W_k}\cdot]\Delta\Fp^{(2)}\Big)\nonumber\\
&+(A\cdot\delta_{W_k})\Big(\Delta_j(\nabla\Fp^{(2)}\cdot\nabla W^{(2)}_k)+[\Delta_j,W^{(2)}_k\cdot]\Delta\Fp^{(2)}\Big)\bigg\},
\end{align}
where $\delta_{V_k}=V^{(1)}_k-V^{(2)}_k$ and $\delta_{W_k}=W^{(1)}_k-W^{(2)}_k$.

Taking $A=\Delta\Delta_j\delta_{\Fp}$ in (\ref{Delta-decomposition}) and then integrating over $\mathbb{R}^2$, and using Lemma \ref{useful-inequality} and Lemma \ref{commutator}, we can derive
\begin{align*}
\|\Delta\Delta_j\delta_{\Fp}\|_{L^2}^2\leq&\frac{1}{2}\sum_{k=1}^3 \|\Delta\Delta_j\delta_{\Fp}\cdot V^{(1)}_k\|_{L^2}^2+C\sum^3_{k=1}\|\Delta\Delta_j\delta_{\Fp}\|_{L^2}\|\delta_{V_k}\|_{L^{\infty}}\|\Delta\Delta_j\Fp^{(2)}\|_{L^2}\\
&+C\sum^6_{k=1}\|\Delta\Delta_j\delta_{\Fp}\|_{L^2}\Big(\|\Delta_j(\nabla\delta_{\Fp}\cdot\nabla W^{(1)}_k)\|_{L^2}+\|[\Delta_j,W^{(1)}_k\cdot]\Delta\delta_{\Fp}\|_{L^2}\Big)\\
&+C\sum^6_{k=1}\|\Delta\Delta_j\delta_{\Fp}\|_{L^2}\Big(\|\Delta_j(\nabla\Fp^{(2)}\cdot\nabla \delta_{W_k})\|_{L^2}+\|[\Delta_j,\delta_{W_k}\cdot]\Delta\Fp^{(2)}\|_{L^2}\Big)\\
&+C\sum^6_{k=1}\|\Delta\Delta_j\delta_{\Fp}\|_{L^2}\|\delta_{W_k}\|_{L^{\infty}}\Big(\|\Delta_j(\nabla\Fp^{(2)}\cdot\nabla W^{(2)}_k)\|_{L^2}+\|[\Delta_j,W^{(2)}_k\cdot]\Delta\Fp^{(2)}\|_{L^2}\Big)\\
\leq&
\frac{1}{2}\sum_{k=1}^3 \|\Delta\Delta_j\delta_{\Fp}\cdot V^{(1)}_k\|_{L^2}^2+C2^{2js}F(t)\CU(t)+\ve\sum^{j+9}_{l=j-9}2^{4l}\|\Delta_l\delta_{\Fp}\|_{L^2}^2.
\end{align*}
Similarly, by taking  $A=\CG-\gamma\Delta\Delta_j\delta_{\Fp}$ in (\ref{Delta-decomposition}), it follows that
\begin{align*}
\int_{\mathbb{R}^2}(\CG-\gamma\Delta\Delta_j\delta_{\Fp})\cdot\Delta\Delta_j\delta_{\Fp}\ud\xx
\leq&\frac{1}{2}\int_{\BR}\big[(\CG-\gamma\Delta\Delta_j\delta_{\Fp})\cdot V^{(1)}_k\big](\Delta\Delta_j\delta_{\Fp}\cdot V^{(1)}_k)\ud\xx\nonumber\\
&+C2^{2js}F(t)\CU(t)+\ve\sum^{j+9}_{l=j-9}2^{4l}\|\Delta_l\delta_{\Fp}\|_{L^2}^2.
\end{align*}
Furthermore, applying integration by parts, there holds
\begin{align*}
&\int_{\mathbb{R}^2}(\CG-\gamma\Delta\Delta_j\delta_{\Fp})\cdot\Delta\Delta_j\delta_{\Fp}\ud\xx\nonumber\\
&\quad\geq\sum_{i=1}^3\Big(k_i\|\nabla\rm{div}\Delta_j\delta_{\nn_i}\|_{L^2}^2+\sum_{\alpha=1}^3k_{\alpha i}\|\nabla(\nabla\times\Delta_j\delta_{\nn_i}\cdot\nn^{(1)}_{\alpha})\|_{L^2}^2\Big)\nonumber\\
&\qquad-C2^{2js}F(t)\CU(t)-\ve\sum^{j+9}_{l=j-9}2^{4l}\|\Delta_l\delta_{\Fp}\|_{L^2}^2.
\end{align*}
On the other hand, by the definitions of $\CG$ and $\CT$, and using (\ref{CG-i-L2}) and (\ref{CT-i-L2}), we obtain
\begin{align*}
\Big|\int_{\BR}\sum_{k=1}^3 (\CG\cdot V^{(1)}_k)(\CT\cdot V^{(1)}_k)\ud\xx\Big|\leq&C\sum^3_{i=1}\|\CG_i\|_{L^2}\|\CT_i\|_{L^2}\\
\leq&C2^{js}F(t)\CU(t)+\ve\sum_{l=j-9}^{j+9}2^{4l}\|\Delta_l\delta_{\Fp}\|_{L^2}^2.
\end{align*}
Then, collecting the above estimates, and recalling the definitions of $\CH^{\Delta_j}_k(k=1,2,3)$ in (\ref{H-Delta-j}), i.e., $\CH^{\Delta_j}_k=\Delta_j\delta_{\hh}\cdot V^{(1)}_k$,  together with $\Delta_j\delta_{\hh}=\CG+\CT$, we can derive that
\begin{align*}
&\sum^3_{k=1}\frac{1}{\chi_k}\|\mathcal{H}^{\Delta_j}_k\|_{L^2}^2\geq\frac{1}{\chi}\sum^3_{k=1}\|\Delta_j\delta_{\hh}\cdot V^{(1)}_k\|^2_{L^2}\\
&\quad\geq\frac{1}{\chi}\sum_{k=1}^3\|\CG\cdot V^{(1)}_k\|_{L^2}^2+\frac{2}{\chi}\sum_{k=1}^3\int_{\BR}\big(\CG\cdot V^{(1)}_k)(\CT\cdot V^{(1)}_k)\ud\xx\\
&\quad\geq\frac{2\gamma}{\chi}\sum_{k=1}^3\int_{\BR}\big[(\CG-\gamma\Delta\Delta_j\delta_{\Fp})\cdot V^{(1)}_k\big](\Delta\Delta_j\delta_{\Fp}\cdot V^{(1)}_k)\ud\xx+\frac{\gamma^2}{\chi}\|\Delta\Delta_j\delta_{\Fp}\cdot V^{(1)}_k\|_{L^2}^2\\
&\qquad-C2^{js}F(t)\CU(t)-\ve\sum_{l=j-9}^{j+9}2^{4l}\|\Delta_l\delta_{\Fp}\|_{L^2}^2\\
&\quad\geq\frac{4\gamma}{\chi}\int_{\BR}(\CG-\gamma\Delta\Delta_j\delta_{\Fp})\cdot\Delta\Delta_j\delta_{\Fp}\ud\xx+\frac{2\gamma^2}{\chi}\|\Delta\Delta_j\delta_{\Fp}\|_{L^2}^2\\
&\qquad-C2^{js}F(t)\CU(t)-\ve\sum_{l=j-9}^{j+9}2^{4l}\|\Delta_l\delta_{\Fp}\|_{L^2}^2\\
&\quad\geq\frac{4\gamma}{\chi}\sum_{i=1}^3\Big(k_i\|\nabla\rm{div}\Delta_j\delta_{\nn_i}\|_{L^2}^2+\sum_{\alpha=1}^3k_{\alpha i}\|\nabla\times(\nabla\times\Delta_j\delta_{\nn_i}\cdot\nn^{(1)}_{\alpha})\|_{L^2}^2\Big)\\
&\qquad+\frac{2\gamma^2}{\chi}\|\Delta\Delta_j\delta_{\Fp}\|_{L^2}^2-C2^{js}F(t)\CU(t)-\ve\sum_{l=j-9}^{j+9}2^{4l}\|\Delta_l\delta_{\Fp}\|_{L^2}^2.
\end{align*}
Hence, we complete the proof the key lemma.
\end{proof}

\subsection{Proof of Theorem \ref{uniqueness-theorem}}

We are now in a position to complete the proof Theorem \ref{uniqueness-theorem}. First of all, we know from Theorem \ref{global-posedness-theorem} in subsection 1.3 and Proposition 4.2 in \cite{LWX} that, for any $\theta,M>0$, the pair $(\Fp,\vv)$ satisfies
\begin{align*}
(\Fp,\vv)\in& V(0,T_1-\theta)\times H(0,T_1-\theta)\cup\cdots\cup V(T_{L-1},T_{L}-\theta)\times H(T_{L-1},T_{L}-\theta)\\
&\cup V(T_L,M)\times H(T_L,M),
\end{align*}
where $\{T_l\}^L_{l=1}$ are the finite number of singular times.
Assume that $T^{(i)}_1$ is the first blow-up time of $(\Fp^{(i)},\vv^{(i)})(i=1,2)$. By means of Proposition 4.2 in \cite{LWX}, we may find that the following regularity for $(\Fp^{i},\vv^{(i)})$ holds,
\begin{align}\label{regularity-Fp-vv}
\int_{\mathbb{R}^2\times[0,T_1-\theta]}|\nabla^2\Fp^{(i)}|^2+|\nabla\Fp^{(i)}|^4+|\nabla\vv^{(i)}|^2+|\vv^{(i)}|^4\ud\xx\ud t<+\infty,
\end{align}
where $\theta>0$ and $T_1=\min\{T^{(1)}_1,T^{(2)}_1\}$. Then, taking advantage of the equations (\ref{new-frame-equation-n1})--(\ref{new-frame-equation-n3}), we can infer that
\begin{align}\label{partial-t-Fp}
\partial_t\Fp^{(i)}\in L^2(\mathbb{R}^2\times[0,T_1-\theta]).
\end{align}
Further, (\ref{regularity-Fp-vv}) and (\ref{partial-t-Fp}) imply that $F(t)$ is a locally integrable function, i.e., $F(t)\in L^1(0,T_1-\theta)$.

On the other hand, summing up (\ref{differen-delta-vv-L2final}), (\ref{claim-Delta-1-L2}) and (\ref{wj-L2-ud-dt}), and together with Lemma \ref{key-lemma}, it follows that
\begin{align}\label{final-energy-L2}
&\frac{\ud}{\ud t}\Big(\frac{1}{2}\|\Delta_j\delta_{\vv}\|_{L^2}^2+\int_{\mathbb{R}^2}W^j(\xx,t)\ud\xx+\frac{1}{2}\|\Delta_{-1}\delta_{\Fp}\|_{L^2}\Big)\\
&\qquad+ca_j\big(2^{2j}\|\Delta_j\delta_{\vv}\|_{L^2}^2+2^{4j}\|\Delta_j\delta_{\nn_i}\|_{L^2}^2\big)\\
&\quad\leq C2^{2js}F(t)\Phi(t)+\ve\sum^{j+4}_{l=j-4}2^{2l}\|\Delta_{l}\delta_{\vv}\|_{L^2}^2+\ve\sum_{l=j-9}^{j+9}2^{4l}\|\Delta_l\delta_{\Fp}\|_{L^2}^2.
\end{align}
Recalling the relation (\ref{Wj-relation}) and taking the supreme in $j$, and choosing $\ve>0$ sufficiently small, we can derive from (\ref{final-energy-L2}) that
\begin{align}\label{energy-estimate}
\Phi(t)\leq C\int_{0}^{t}F(\tau)\Phi(\tau)\ud\tau.
\end{align}
Then, applying Lemma \ref{Osgood-lemma}, that is the Osgood lemma, to (\ref{energy-estimate}) and using the given same initial data, we find that $\Phi(t)=0$ for any $\theta>0$ and $t\in[0,T_1-\theta]$, where $T_1$ is the first singular time. Consequently, we have  $(\Fp^{(1)},\vv^{(1)})(t)=(\Fp^{(2)},\vv^{(2)})(t)$ for any $t\in[0,T_1)$. Indeed, we can further get $(\Fp^{(1)},\vv^{(1)})(T_1)=(\Fp^{(2)},\vv^{(2)})(T_1)$, since $(\Fp^{(i)},\vv^{(i)})(t)$ is weakly continuous for any $t\in[0, +\infty)$, i.e., $(\Fp^{(i)},\vv^{(i)})\in C_{w}([0,+\infty);H^1_{\Fp^*}\times L^2)$.
The similar argument implies that there exists the second singular time $T_2>0(T_2>T_1)$ and such that $(\vv^{(1)},\Fp^{(1)})(t)=(\vv^{(2)},\Fp^{(2)})(t)$ with $t\in[T_1,T_2)$. We can thus obtain $(\Fp^{(1)},\vv^{(1)})(t)=(\Fp^{(2)},\vv^{(2)})(t)$ for any $t\in[0,+\infty)$, since the number of singular time for weak solution is finite. Hence, we finish the proof of Theorem \ref{uniqueness-theorem}.

\bigskip
\noindent{\bf Acknowledgments.}
 Sirui Li is partially supported by the NSFC under grant No. 12061019 and by the Growth Foundation for Youth Science and Technology Talent of Educational Commission of Guizhou Province of China under grant No. [2021]087. Jie Xu is partially supported by the NSFC under grant Nos. 12288201 and 12001524.

\end{document}